\newtheorem{theorem}{Theorem}[section]
\newtheorem{lemma}[theorem]{Lemma}
\newtheorem{proposition}[theorem]{Proposition}
\newtheorem{corollary}[theorem]{Corollary}
\theoremstyle{definition}
\newtheorem{definition}[theorem]{Definition}
\theoremstyle{remark}
\newtheorem{remark}[theorem]{Remark}
\numberwithin{equation}{section}
\begin{document}
\baselineskip=17pt
	
	\title[On Chains Associated with Abstract Key Polynomials]{On Chains Associated with Abstract Key Polynomials}
	\author[Sneha Mavi]{Sneha Mavi}
	\address{Department of Mathematics\\ University of Delhi\\  Delhi-110007, India.}
	\email{mavisneha@gmail.com}
	\author[Anuj Bishnoi]{Anuj Bishnoi$^\ast$}
	\address{Department of Mathematics\\  University of Delhi \\   Delhi-110007, India.}
	\email{abishnoi@maths.du.ac.in}
	
	\begin{abstract}
	In this paper, 	for a henselian  valued field $(K,v)$ of arbitrary rank  and an extension $w$ of $v$ to $K(X),$  we use abstract key polynomials for $w$ to give a connection between complete sets, saturated distinguished chains and Okutsu frames. Further, for a valued field $(K,v),$ we also obtain a close connection between complete set of ABKPs for   $w$   and Maclane-Vaqui\'e chains  of $w.$ 
	\end{abstract}
	\subjclass[2020]{12F20, 12J10,   13A18}
	\keywords{Abstract key polynomials, key polynomials, minimal pairs,  MacLane-Vaqui\'e  chains,  Okutsu frames, optimal Maclane chains, saturated distinguished chains}
	\thanks{$^\ast$Corresponding author, E-mail address: abishnoi@maths.du.ac.in}
	\maketitle
	
	\section{Introduction }
	Let $(K,v)$ be a henselian valued field and $w$ be an extension of $v$ to $K(X).$ In this paper, we first  give some  conditions under which   a saturated distinguished chain leads to the notion of a complete set of abstract key polynomials for a valuation-transcendental extension $w.$ Recall that a   valuation-transcendental extension of  $v$ to $K(X)$ is either value-transcendental or residually transcendental   and they  are well studied using abstract key polynomials (see \cite{MMS}, \cite{NS}-\cite{NSD}). 
	
	 In 1982, Okutsu associated to a monic irreducible polynomial $F\in K[X]$ a
	family of monic irreducible polynomials, $F_1, . . . , F_r,$  called the primitive divisor polynomials of $F$ \cite{Oku}, later these polynomials  were  studied   in papers \cite{GME}, \cite{JS},  \cite{Ol-Na} and  \cite{Ol}, and they called the chain  of such polynomials $[F_1, . . . , F_r],$  an Okutsu frame for $F.$ Moreover, they proved that Okutsu frames,   saturated distinguished chains and  optimal Maclane chains  are closely related. In this paper, we also establish a similar connection between  saturated distinguished chains and Okutsu frames, however, our proof is elementary. 
	
	 Next, for a valued field $(K,v),$ we give some  conditions under which a  complete set of ABKPs for a valuation $w$ of  $K(X)$ give rise to an  optimal Maclane chain of $w$ and conversely.     It is also observed that over a residually transcendental extension, the notion of saturated distinguished chains,  Okutsu frames, optimal Maclane chains and  complete set of ABKPs (under certain conditions) are equivalent.
	 
	 In  1936,   Maclane \cite{M}, proved that an extension $w$ of a discrete rank one valuation  $v$ to $K[X]$ can be obtained as a chain of augmentations
	 \begin{align*}
	 	w_0\xrightarrow{\phi_1,\gamma_1} w_1\xrightarrow{\phi_2,\gamma_2}\cdots \longrightarrow w_{n-1}\xrightarrow{\phi_n,\gamma_n} w_n\longrightarrow\cdots
	 \end{align*}
for some suitable  key polynomials $\phi_i\in K[X]$ for  intermediate valuations and elements  $\gamma_{i}$ in some  totally ordered abelian group containing the value group of $v$ as an ordered subgroup. Later,   Vaqui\'e generalized Maclane's theory to arbitrary valued fields (see \cite{V}). More
	recently,  Nart gave a survey of generalized Maclane-Vaqui\'e  theory   in \cite{EN} and \cite{EN1}. 
	Starting with a valuation $w_0$ which admit key polynomials of degree one, Nart also introduced Maclane-Vaqui\'e  chains, consisting of a mixture of ordinary and limit augmentations satisfying some conditions (see Definition \ref{1.1.13}). The main result Theorem 4.3  of \cite{EN1} says that all extensions  $w$  of $v$ to $K[X]$ falls exactly in one of the following category:
	\begin{enumerate}[(i)]
		\item It is the last valuation of a complete finite Maclane-Vaqui\'e chain, i.e., after a finite number $r$ of augmentation steps, we get $w_r=w.$
		\item After a finite number $r$ of augmentation steps, it is the stable limit of a continuous family of augmentations  of $w_r$ defined by key polynomials of constant degree.
		\item  It is the stable limit of a complete infinite Maclane-Vaqui\'e chain.
	\end{enumerate}
	    In this paper, we  study  Maclane-Vaqui\'e chains of the first type and prove that a precise complete finite  Maclane-Vaqui\'e chain can be obtained   using a given complete set $\{Q_i\}_{i\in\Delta}$ of ABKPs for $w$ such that $\Delta$ has a maximal element. Conversely, if $w$ is the last valuation of some complete finite   Maclane-Vaqui\'e chain, then there exists a complete set $\{Q_i\}_{i\in\Delta}$ of ABKPs for $w$ such that $\Delta$ has a maximal element.
	
	To state the main results of the paper,  we first recall some notations, definitions and  preliminary results.

	\section{Notations, Definitions and Statements of Main Results}
	
	Throughout the paper, $(K,v)$ denote a    valued field of arbitrary rank  with value group $\Gamma_ v,$ valuation ring $O_v$ having a unique maximal ideal $M_v,$ and    residue field  $k_{v}.$ Let $\bar{v}$ be an extension of $v$ to a fixed algebraic closure $\overline{K}$ of $K$ with value group $\Gamma_{\bar{v}}$ and residue field $k_{\bar{v}}.$   Let $w$ be  an extension of $v$ to the simple transcendental extension $K(X)$ of $K$ with value group $\Gamma_{w}$ and residue field $k_{w}.$

	\medskip
 All extensions  of $v$ to $K(X)$ are classified  as follows:
	\begin{definition}
		The extension $w$ of $v$ to $K(X)$ is said to be \emph{valuation-algebraic} if $\frac{\Gamma_{w}}{\Gamma_{v}}$ is a torsion group and $k_w$ is algebraic over $k_v.$ The extension $w$ is said to be \emph{value-transcendental} if $\frac{\Gamma_w}{\Gamma_v}$ is a torsion free group and $k_w$ is algebraic over $k_v.$ 
	\end{definition}
	If $L$ is an extension field of $K,$ then an extension $v_L$ of $v$ to $L$ is called residually transcendental (abbreviated as r.\ t.) if the corresponding residue field extension $k_{v_L}/ k_v$ is transcendental.
	
	\begin{definition}
		The extension $w$ of $v$ to $K(X)$ is called \emph{valuation-transcendental} if $w$ is either value-transcendental or is residually transcendental.
	\end{definition}
	An extension $\overline{w}$ of $w$ to $\overline{K}(X)$ which is also an extension of $\bar{v}$ is called a \emph{common extension} of $w$ and $\bar{v}.$
	A common extension $\overline{w}$  is valuation-transcendental if and only if $w$ is valuation-transcendental (see \cite[Lemma 3.3]{FV-K}).

	Let $\overline{w}$ be a  common extension of  $w$ and $\bar{v}$ to $\overline{K}(X).$ Then
	for a pair $(\alpha,\delta)\in\overline{K}\times\Gamma_{\overline{w}},$ the map $\overline{w}_{\alpha,\delta}: \overline{K}[X]\longrightarrow \Gamma_{\overline{w}},$ given by $$\overline{w}_{\alpha,\delta}\left(\sum_{i\geq 0} c_i (X-\alpha)^i\right):=\min_{i\geq 0}\{\bar{v}(c_i)+i\delta\}, \, c_i\in\overline{K},$$
	is a valuation on $\overline{K}[X]$ and can be uniquely extended to $\overline{K}(X)$  (cf. \cite[Theorem 2.2.1]{En-Pr}). Such a valuation is said to be defined by $\min,\, \bar{v},\, \alpha$ and $\delta.$ If $\overline{w}=\overline{w}_{\alpha,\delta},$ then we say that $(\alpha,\delta)$ is a pair of definition for $w.$
	
	\begin{definition}
		A pair $(\alpha,\delta)$ in $\overline{K}\times \Gamma_{\overline{w}}$ is called a \emph{minimal pair} of definition for $w$  if 
	 $\overline{w}=\overline{w}_{\alpha,\delta}$ and 
	  for every $\beta$ in $\overline{K},$ satisfying $\bar{v}(\alpha-\beta)\geq\delta,$ we have  $\deg\beta\geq\deg\alpha,$ where by $\deg\alpha$ we mean the degree of the extension $K(\alpha)/K.$ 
	\end{definition}
	\begin{remark}
			In the above definition, if $\Gamma_{\overline{w}}=\Gamma_{\bar{v}},$ then  the minimal  pair $(\alpha,\delta)$ is  called a $(K,v)$-\emph{minimal pair}.
		\end{remark}
	Let $(K,v)$ be a henselian valued field. 
If  $\theta\in K,$ then $(\theta,\delta)$ is a minimal pair for each $\delta\in\Gamma_{\bar{v}}$ and it is immediate from the definition  that a pair $(\theta,\delta)\in(\overline{K}\setminus K)\times \Gamma_{\bar{v}}$ is minimal if and only if $\delta$ is strictly greater than each element of the set $M(\theta, K)$ defined by  
\begin{align*}
	M(\theta, K):=\{\bar{v}(\theta-\beta)\mid\beta\in\overline{K},~ \deg\beta<\deg\theta\}.
\end{align*}
This led to the notion of main invariant 
\begin{align*}
	\delta_K(\theta):=\max M(\theta, K)
\end{align*}
defined for those $\theta\in\overline{K}\setminus K$ for which the set $M(\theta, K)$ contains a maximum. In general, this maximum value may not exist. However, in 2002, Aghigh and Khanduja gave some necessary and sufficient condition under which the set $M(\theta, K)$ has a maximum element for every $\theta\in\overline{K}\setminus K $ (see \cite[Theorem 1.1]{AK1}).
We now recall the notion of distinguished pairs which was introduced by Popescu and Zaharescu \cite{PZ}, for local fields in 1995 and was later generalized to arbitrary henselian valued fields (\cite{AK1} and \cite{AN1}).
\begin{definition}[\bf Distinguished pairs]
	A pair $(\theta,\alpha)$ of elements of $\overline{K}$ is called a $(K,v)$-\emph{distinguished pair}  if the following conditions are satisfied:
	\begin{enumerate}[(i)]
		\item $\deg\theta>\deg\alpha,$
		\item $\bar{v}(\theta-\alpha)=\max\{\bar{v}(\theta-\beta)\mid \beta\in\overline{K},\, \deg\beta<\deg\theta \}=\delta_K(\theta),$
		\item if $\eta\in\overline{K}$ be such that $\deg\eta<\deg\alpha,$ then $\bar{v}(\theta-\eta)<\bar{v}(\theta-\alpha).$
	\end{enumerate}
\end{definition}
Equivalently, we say that $(\theta,\alpha)$ is a distinguished pair, if $\alpha$ is an element in $\overline{K}$  of minimal degree over $K$ such that $$\bar{v}(\theta-\alpha)=\delta_K(\theta).$$
Clearly (iii) implies that $(\alpha,\bar{v}(\theta-\alpha))$ is a $(K,v)$-minimal pair. Also for any two monic irreducible polynomials $f$ and $g$ over $K,$ we call $(g,f)$ a distinguished pair, if there exists a root $\theta$ of $g$ and a root $\alpha$ of $f$ such that $(\theta,\alpha)$ is a $(K,v)$-distinguished pair.
Distinguished pairs give rise to distinguished chains in a natural manner. A chain $\theta=\theta_r,\theta_{r-1},\ldots,\theta_0$ of elements of $\overline{K}$ is called a \emph{saturated distinguished chain}  for $\theta$  of length $r,$ if $(\theta_{i+1},\theta_{i})$ is a $(K,v)$-distinguished pair for every $0\leq i\leq r-1$ and $\theta_0\in K.$ 
\begin{definition}
	Let $w$ be a valuation of $K(X)$ and $\overline{w}$ a fixed common extension of $w$ and $\bar{v}$ to $\overline{K}(X).$
For any polynomial $f$ in $K[X],$ we call a root $\alpha$ of $f$ in $\overline{K}$ an \emph{optimizing root} of $f$ if 
$$\overline{w}(X-\alpha)=\max\{\overline{w}(X-\alpha')\mid f(\alpha')=0\}=\delta(f).$$ We call $\delta(f)$  the \emph{optimal value} of $f$ with respect to $\overline{w}.$
\end{definition}

	\begin{definition}[\bf Abstract key polynomials]
		A monic polynomial $Q$ in $K[X]$ is said to be an \emph{abstract key polynomial}  (abbreviated as ABKP) for $w$ if for each polynomial $f$ in $K[X]$   with $\deg f< \deg Q$ we have $\delta(f)<\delta(Q).$
	\end{definition}
	It is immediate from the definition that all monic  linear polynomials are ABKPs for $w.$  Also an ABKP for $w$ is an irreducible polynomial (see   \cite[Proposition 2.4]{NS}).
	\begin{definition}
		For a polynomial $Q$ in $K[X]$ the \emph{$Q$-truncation} of $w$ is a map $w_Q:K[X]\longrightarrow \Gamma_w$ defined by 
		$$ w_Q(f):= \min_{0\leq i\leq n}\{w(f_iQ^i)\},$$
		where $\sum_{i=0}^{n} f_i Q^i,$ $\deg f_i <\deg Q,$  is the $Q $-expansion of $f.$ 
	\end{definition}
	The $Q$-truncation  $w_Q$  of $w$ need not be a valuation \cite[Example 2.5]{NS}. However,  if $Q$ is an ABKP for $w,$ then $w_Q$ is a valuation on $K(X)$ (see \cite[Proposition 2.6]{NS}). Also any ABKP, $Q$ for $w,$ is also an ABKP for the truncation valuation $w_Q.$
For   an ABKP,  $Q$ in $K[X]$ for $w,$  we set 
\begin{align*}
	\alpha(Q):=&\min\{\deg f\mid f\in K[X],~ w_Q(f)<w(f)\}, ~
	\text{(if $w_Q=w,$ then  $\alpha(Q):=\infty$) and}\\
	\psi(Q):=&\{f\in K[X]\mid f \text{ is monic,}  w_Q(f)<w(f)\,  \text{and} \deg f=\alpha(Q) \}. 
\end{align*}
Clearly $\alpha(Q)\geq \deg Q.$ Also, 
observe that $w_Q$ is a proper truncation of $w,$  (i.e.,   $w_Q<w$)  if and only if $\psi(Q)\neq \emptyset.$  
\begin{lemma}[Lemma 2.11, \cite{NS}] \label{1.2.6}
	If $Q$ is an ABKP for $w,$ then every element $F\in\psi(Q)$ is also an ABKP for $w$ and $\delta(Q)<\delta(F).$
\end{lemma}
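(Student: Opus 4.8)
The plan is to reduce both assertions to a single comparison criterion: for every $h\in K[X]$,
\[
w_Q(h)=w(h)\iff \delta(h)\le\delta(Q),\qquad\text{equivalently}\qquad w_Q(h)<w(h)\iff \delta(h)>\delta(Q).
\]
Granting this, the lemma is immediate. Since $F\in\psi(Q)$ we have $w_Q(F)<w(F)$, so the criterion forces $\delta(Q)<\delta(F)$, which is the second assertion. For the first, let $g\in K[X]$ with $\deg g<\deg F=\alpha(Q)$; by the very definition of $\alpha(Q)$ as the least degree on which $w_Q$ and $w$ can disagree (and since $w_Q\le w$ always, by the ultrametric inequality applied to the $Q$-expansion), we have $w_Q(g)=w(g)$, whence $\delta(g)\le\delta(Q)<\delta(F)$. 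As this holds for all $g$ of degree $<\deg F$, the polynomial $F$ is an ABKP for $w$.

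To establish the criterion I would fix an optimizing root $\alpha$ of $Q$ and set $\gamma:=\delta(Q)=\overline{w}(X-\alpha)$, and then compare $\overline{w}$ with the minimal-pair valuation $\overline{w}_{\alpha,\gamma}$. For any $\beta\in\overline{K}$, writing $X-\beta=(X-\alpha)+(\alpha-\beta)$ gives on one hand $\overline{w}_{\alpha,\gamma}(X-\beta)=\min\{\bar v(\alpha-\beta),\gamma\}$ straight from the definition of $\overline{w}_{\alpha,\gamma}$, and on the other hand $\overline{w}(X-\beta)\ge\min\{\gamma,\bar v(\alpha-\beta)\}$ with equality whenever $\gamma\ne\bar v(\alpha-\beta)$. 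A short case analysis on the sign of $\bar v(\alpha-\beta)-\gamma$ then yields $\overline{w}(X-\beta)\ge\overline{w}_{\alpha,\gamma}(X-\beta)$ in all cases, with strict inequality precisely when $\overline{w}(X-\beta)>\gamma$.

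Now I would sum over roots. Using that both $\overline{w}$ and $\overline{w}_{\alpha,\gamma}$ are valuations on $\overline{K}(X)$ and that $h=\prod_\beta(X-\beta)$ splits over $\overline{K}$, one gets $w(h)=\overline{w}(h)=\sum_\beta\overline{w}(X-\beta)$ and, crucially, $w_Q(h)=\overline{w}_{\alpha,\gamma}(h)=\sum_\beta\overline{w}_{\alpha,\gamma}(X-\beta)$. Subtracting termwise gives $w(h)-w_Q(h)=\sum_{\beta:\,\overline{w}(X-\beta)>\gamma}(\overline{w}(X-\beta)-\gamma)\ge 0$, which is strictly positive exactly when some root $\beta$ satisfies $\overline{w}(X-\beta)>\gamma$, i.e. exactly when $\delta(h)=\max_\beta\overline{w}(X-\beta)>\gamma=\delta(Q)$. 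This is precisely the criterion, and both parts of the lemma drop out as above.

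The one substantial input — and the step I expect to be the main obstacle — is the identification $w_Q=\overline{w}_{\alpha,\gamma}|_{K[X]}$ invoked above, namely that the $Q$-truncation of $w$ coincides with the restriction of the minimal-pair valuation attached to an optimizing root of $Q$ with $\gamma=\delta(Q)$. This is the bridge between abstract key polynomials and minimal pairs, and I would draw it from the theory developed prior to this lemma; once it is in hand, everything else is the elementary value bookkeeping described above. If one prefers to avoid this identification, one can instead argue directly from the $Q$-expansion $F=\sum_i f_iQ^i$ with $\deg f_i<\deg Q$, but the attendant Newton-polygon computation is exactly what the minimal-pair valuation repackages, so I would favour the route above.
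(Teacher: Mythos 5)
First, a point of reference: the paper does not prove this lemma at all --- it imports it verbatim from \cite{NS} (Lemma 2.11 there) --- so there is no in-paper argument to compare yours against. Measured against the original proof, your route is genuinely different: in \cite{NS} abstract key polynomials are defined through a derivative-based invariant $\epsilon$ (via Hasse--Taylor expansions), and the disagreement criterion is obtained from estimates on the terms of those expansions, whereas you work with the root-based invariant $\delta$ that this paper actually uses, reduce both assertions of the lemma to the single criterion $w_Q(h)<w(h)\iff \delta(h)>\delta(Q)$, and prove the criterion by a root-by-root comparison of $\overline{w}$ with the minimal-pair valuation $\overline{w}_{\alpha,\delta(Q)}$. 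Your two reductions are correct (the first uses $w_Q\leq w$ together with the minimality in the definition of $\alpha(Q)$, the second just the definition of $\psi(Q)$), and the case analysis giving $w(h)-w_Q(h)=\sum_{\overline{w}(X-\beta)>\delta(Q)}\bigl(\overline{w}(X-\beta)-\delta(Q)\bigr)$ is also correct. This is an attractive argument precisely because it is phrased in the paper's own language of optimizing roots and common extensions.

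The load-bearing step, however, is the identification $w_Q=\overline{w}_{\alpha,\delta(Q)}|_{K[X]}$, and your sourcing of it is too vague to stand: ``the theory developed prior to this lemma'' does not contain it. Within this paper the only statement of this kind is Theorem \ref{1.1.2}, which both occurs after the lemma and, more importantly, treats only the case $w=w_Q$; in the situation of the lemma one has $w_Q<w$ (indeed $\psi(Q)\neq\emptyset$ forces a proper truncation), so Theorem \ref{1.1.2} as stated is not applicable. What you need is the truncation theorem of Novacoski--De Souza (Theorem 1.1 of \cite{NSD}), which asserts exactly this identity for an arbitrary ABKP $Q$. Fortunately your argument is not circular if you cite that result: its proof shows that $(\alpha,\delta(Q))$ is a minimal pair (an elementary ultrametric computation of the kind you already perform) and then invokes the classical description of minimal-pair valuations on $Q$-expansions (cf. \cite{PP}), none of which depends on the present lemma. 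But as written, the central ingredient of your proof is an unidentified black box; name it, and the rest of your bookkeeping goes through.
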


Let $Q$ be an ABKP for $w$ and suppose that $Q$ has a saturated distinguished chain. In the following result we prove  that each member of this chain is also an ABKP for $w,$ further, we also observe that  optimal values and main invariants associated with a saturated distinguished chain of polynomials are closely related. 
\begin{proposition}\label{1.1.6}
Let $(K,v)$ be a henselian valued field and $\bar{v}$ a unique extension of $v$ to $\overline{K}.$	Let  $Q$ be an ABKP for a valuation $w$ of $K(X).$ If $(Q_r=Q, Q_{r-1},\ldots, Q_0)$ is a saturated distinguished chain for $Q.$ Then
	\begin{enumerate}[(i)]
		\item Each $Q_i,$ $0\leq i\leq r-1,$ is an ABKP for $w.$
		\item $\delta(Q_r)>\delta(Q_{r-1})>\cdots>\delta(Q_0)$ and for optimizing roots $\theta_{i}$ of $Q_i,$ $0\leq i\leq r, $  
		$$\delta_K(\theta_{i})=\delta(Q_{i-1})~ \forall ~1\leq i\leq r.$$
	\end{enumerate} 
\end{proposition}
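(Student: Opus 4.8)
The plan is to isolate a single \emph{descent step} and iterate it down the chain, using henselianity to move freely between conjugate roots. The descent step is: if $Q$ is an ABKP for $w$ with an \emph{optimizing} root $\theta$ (so $\overline{w}(X-\theta)=\delta(Q)$) and $(\theta,\alpha)$ is a $(K,v)$-distinguished pair, then, writing $Q'$ for the minimal polynomial of $\alpha$ over $K$, one has $\delta_K(\theta)<\delta(Q)$, the element $\alpha$ is an optimizing root of $Q'$ with $\overline{w}(X-\alpha)=\delta(Q')=\delta_K(\theta)$, and $Q'$ is again an ABKP for $w$. The only tool is the ultrametric identity $\overline{w}(u+u')=\min\{\overline{w}(u),\overline{w}(u')\}$ when $\overline{w}(u)\neq\overline{w}(u')$, applied to $X-\beta=(X-\theta)+(\theta-\beta)$ together with $\overline{w}(\theta-\beta)=\bar v(\theta-\beta)$.

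I would carry out the descent step as follows. First, $\delta_K(\theta)<\delta(Q)$: otherwise $\overline{w}(X-\alpha)\ge\min\{\delta(Q),\delta_K(\theta)\}=\delta(Q)$, so $\delta(Q')\ge\overline{w}(X-\alpha)\ge\delta(Q)$, contradicting that $Q$ is an ABKP while $\deg Q'=\deg\alpha<\deg\theta=\deg Q$. Knowing $\bar v(\theta-\alpha)=\delta_K(\theta)<\delta(Q)=\overline{w}(X-\theta)$, the ultrametric identity forces $\overline{w}(X-\alpha)=\delta_K(\theta)$. For any other root $\alpha'$ of $Q'$ one has $\deg\alpha'<\deg\theta$, hence $\bar v(\theta-\alpha')\le\delta_K(\theta)<\overline{w}(X-\theta)$ and again $\overline{w}(X-\alpha')=\bar v(\theta-\alpha')\le\delta_K(\theta)$; therefore $\delta(Q')=\delta_K(\theta)$ and $\alpha$ is optimizing. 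Finally, for any $f$ with $\deg f<\deg Q'$ every root $\beta$ of $f$ has $\deg\beta<\deg\alpha$, so condition (iii) of the distinguished pair gives $\bar v(\theta-\beta)<\bar v(\theta-\alpha)=\delta_K(\theta)$, whence $\overline{w}(X-\beta)=\bar v(\theta-\beta)<\delta_K(\theta)=\delta(Q')$; taking the maximum over the roots of $f$ yields $\delta(f)<\delta(Q')$, so $Q'$ is an ABKP.

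To iterate down the given chain I would first reduce to the case where the top root is optimizing. Since $(K,v)$ is henselian, $\bar v$ is invariant under $\mathrm{Gal}(\overline{K}/K)$, so $\delta_K$ is constant on conjugacy classes and distinguished pairs are carried to distinguished pairs by conjugation. Choosing $\sigma\in\mathrm{Gal}(\overline{K}/K)$ with $\sigma\theta_r$ an optimizing root of $Q_r$, the chain $(\sigma\theta_r,\ldots,\sigma\theta_0)$ is again a saturated distinguished chain whose $i$-th term is a root of the \emph{same} $Q_i$ and satisfies $\delta_K(\sigma\theta_i)=\delta_K(\theta_i)$; thus I may assume $\theta_r$ is optimizing. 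Now applying the descent step to $\theta_r$ and $(\theta_r,\theta_{r-1})$ shows that $Q_{r-1}$ is an ABKP, that $\theta_{r-1}$ is an optimizing root of $Q_{r-1}$, and that $\delta(Q_{r-1})=\delta_K(\theta_r)<\delta(Q_r)$; since $(\theta_{r-1},\theta_{r-2})$ is again distinguished with $\theta_{r-1}$ now known to be optimizing, the step applies verbatim, and downward induction yields (i) that every $Q_i$ is an ABKP and (ii) that $\delta(Q_i)>\delta(Q_{i-1})=\delta_K(\theta_i)$ for $1\le i\le r$. As $\delta_K$ is conjugation-invariant and $\delta(Q_{i-1})$ is a fixed value, the identity $\delta_K(\theta_i)=\delta(Q_{i-1})$ then holds for \emph{any} optimizing root $\theta_i$ of $Q_i$, as stated.

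I expect the main obstacle to be the descent step, and within it the verification that $Q'$ is an ABKP, i.e. controlling $\delta(f)$ for all lower-degree $f$: this is precisely where distinguished-pair condition (iii) enters and cannot be dispensed with. The only other genuinely structural input is the henselian conjugation-invariance of $\bar v$, which both makes $\delta_K$ an invariant of the polynomial and lets me promote the top root to an optimizing one; everything else reduces to repeated use of the ultrametric identity.
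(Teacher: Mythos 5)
Your proof is correct, and while the central computation --- distinguished-pair condition (iii) fed into the strong triangle law --- is the same one the paper runs inside Lemma \ref{1.1.5}, your organization of the argument is genuinely different. The paper decomposes the proposition into two imported ingredients: part (i) is obtained by iterating Lemma \ref{1.1.5}(ii) down the chain, and that lemma's proof leans on Lemma \ref{1.1.5}(i) (optimizing roots of the two polynomials in a distinguished pair of polynomials form a $(K,v)$-distinguished pair), which is cited from Lemma 2.1 of \cite{S-A}; part (ii) is then a separate argument that re-invokes Lemma \ref{1.1.5}(i) to see that the optimizing roots form a saturated distinguished chain, and uses Lemma \ref{2.1.7} of Aghigh--Khanduja to get $\bar{v}(\theta_i-\theta_{i-1})=\delta_K(\theta_i)>\delta_K(\theta_{i-1})$ before the triangle law identifies $\bar{v}(\theta_i-\theta_{i-1})$ with $\delta(Q_{i-1}).$ You instead prove a single self-contained descent step that is strictly stronger than Lemma \ref{1.1.5}: when the upper root $\theta$ is optimizing, the lower element $\alpha$ of the distinguished pair is \emph{automatically} an optimizing root of its minimal polynomial $Q'$, with $\delta(Q')=\delta_K(\theta)<\delta(Q)$ and $Q'$ again an ABKP; and since the given chain need not have an optimizing root on top, you normalize by conjugation, using henselianity ($\bar{v}\circ\sigma=\bar{v}$) to see that distinguished chains and $\delta_K$ are conjugation-stable, after which optimality propagates down the chain by the descent step itself. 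What your route buys: no appeal to the results of \cite{S-A} and \cite{AK2}, and parts (i) and (ii) fall out of one downward induction simultaneously, with $\delta_K(\theta_i)=\delta(Q_{i-1})$ produced en route rather than assembled afterwards. What the paper's route buys: a shorter on-page proof and a quotable intermediate statement (Lemma \ref{1.1.5}) separating the root-level and polynomial-level assertions. One cosmetic remark: if $K$ is imperfect then $\overline{K}/K$ is not Galois, so you should speak of $K$-automorphisms of $\overline{K}$ rather than $\mathrm{Gal}(\overline{K}/K)$; such automorphisms still act transitively on the roots of an irreducible polynomial and still satisfy $\bar{v}\circ\sigma=\bar{v}$ by uniqueness of the extension, so nothing else in your argument changes.
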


	The following result can be easily deduced from Theorem 1.1 of \cite{NSD} and Theorem 1.1 of \cite{JN},  and  gives a characterization of valuation-transcendental extensions.
	\begin{theorem}\label{1.1.2}
		An extension $w$ of $v$ to $K(X)$ is valuation-transcendental if and only if  $w=w_Q,$ for some ABKP, $Q$ for  $w.$ Moreover, if $\alpha$ is an optimizing root of $Q,$ then $(\alpha,\delta(Q))$ is a minimal pair of definition for $w$ and  $w=w_Q=\overline{w}_{\alpha,\delta(Q)}|_{K(X)}.$ 
			\end{theorem}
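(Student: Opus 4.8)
The plan is to assemble the statement from the two cited results via the dictionary between minimal pairs and abstract key polynomials, so that the work is organizational rather than computational. First I would record the two inputs in usable form. Theorem 1.1 of \cite{JN} I read as: $w$ is valuation-transcendental if and only if $w=\overline{w}_{\alpha,\delta}|_{K(X)}$ for some minimal pair $(\alpha,\delta)\in\overline{K}\times\Gamma_{\overline{w}}$; in particular any restriction to $K(X)$ of a valuation defined by a minimal pair is valuation-transcendental. Theorem 1.1 of \cite{NSD} I read as the ABKP--minimal-pair correspondence: if $(\alpha,\delta)$ is a minimal pair and $Q$ is the minimal polynomial of $\alpha$ over $K,$ then $Q$ is an ABKP for $w=\overline{w}_{\alpha,\delta}|_{K(X)},$ the root $\alpha$ is optimizing, $\delta(Q)=\delta,$ and $w=w_Q$; conversely, if $Q$ is an ABKP for $w$ and $\alpha$ is an optimizing root of $Q,$ then $(\alpha,\delta(Q))$ is a minimal pair of definition for $w$ and $w_Q=\overline{w}_{\alpha,\delta(Q)}|_{K(X)}.$

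With these in hand both directions are short. For the forward implication, if $w$ is valuation-transcendental then the first input supplies a minimal pair $(\alpha,\delta)$ with $w=\overline{w}_{\alpha,\delta}|_{K(X)}$; choosing $Q$ to be the minimal polynomial of $\alpha,$ the second input makes $Q$ an ABKP for $w$ and yields $w=w_Q.$ For the reverse implication, if $w=w_Q$ for an ABKP $Q,$ I would pick an optimizing root $\alpha$ of $Q$; the second input gives that $(\alpha,\delta(Q))$ is a minimal pair with $w_Q=\overline{w}_{\alpha,\delta(Q)}|_{K(X)},$ hence $w=\overline{w}_{\alpha,\delta(Q)}|_{K(X)}$ is defined by a minimal pair, and the first input forces $w$ to be valuation-transcendental. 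The \emph{moreover} clause is exactly the content of the converse half of the second input, and the identity $\delta(Q)=\delta$ is immediate because $\overline{w}_{\alpha,\delta}(X-\alpha)=\delta$ from the defining minimum-formula, while $\delta(Q)=\overline{w}(X-\alpha)$ for an optimizing root $\alpha.$

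The main obstacle I anticipate is not mathematical depth but reconciling conventions across the two sources: I must check that \emph{minimal pair of definition for} $w$ as used here (requiring $\overline{w}=\overline{w}_{\alpha,\delta}$ on all of $\overline{K}(X)$ together with the degree-minimality condition) is literally what the correspondence in \cite{NSD} produces, and that the value attached to that pair coincides with the optimal value $\delta(Q)$ rather than merely being cofinal or equivalent to it. A secondary point to verify is that the restricted valuation $\overline{w}_{\alpha,\delta(Q)}|_{K(X)}$ is independent of which optimizing root $\alpha$ is chosen, so that the identity $w=w_Q$ is unambiguous; this is where the degree-minimality built into the minimal pair is used, since it guarantees that the optimizing roots all have minimal degree over $K$ and induce the same truncation.
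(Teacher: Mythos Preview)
Your proposal is correct and matches the paper's approach exactly: the paper does not give a proof of this theorem at all, but simply states that it ``can be easily deduced from Theorem 1.1 of \cite{NSD} and Theorem 1.1 of \cite{JN}.'' Your write-up is a faithful unpacking of precisely that deduction, and the caveats you flag about reconciling conventions are the only genuine work involved.
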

			It is known that if $w$ is a valuation-transcendental extension of $v$ to $K(X)$ defined by some minimal pair $(\theta,\delta)\in\overline{K}\times\Gamma_{\overline{w}},$ then the minimal polynomial of $\theta$ over $K$ is  an ABKP for $w$ (see \cite{NSD}). Therefore, keeping this in mind the next result follows immediately from Proposition \ref{1.1.6}.
		\begin{corollary}\label{1.1.7}
			Let $(K,v)$ be a henselian valued field and $w$ be a valuation-transcendental extension of $v$ to $K(X)$ defined by some minimal pair $(\theta,\delta)\in(\overline{K}\setminus K)\times\Gamma_{\overline{w}},$ and let $(\theta=\theta_r, \theta_{r-1},\ldots, \theta_0)$ be a saturated distinguished chain for $\theta.$  Then the  minimal polynomials $Q_i$ of $\theta_{i}$ over $K,$  $0\leq i\leq r,$ are  ABKPs for $w.$
		\end{corollary}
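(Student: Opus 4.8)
The plan is to reduce the statement to Proposition~\ref{1.1.6} by first recognizing the top member of the chain as an ABKP and then rewriting the element-level distinguished chain as a polynomial-level one. I would begin with the fact recalled immediately before the statement: since $w$ is valuation-transcendental and is defined by the minimal pair $(\theta,\delta)$, the minimal polynomial $Q_r$ of $\theta=\theta_r$ over $K$ is an ABKP for $w$ (see \cite{NSD}; equivalently, by Theorem~\ref{1.1.2} one has $w=w_{Q_r}$ with $(\theta,\delta(Q_r))$ a minimal pair of definition for $w$). This already settles the case $i=r$ and, crucially, supplies the hypothesis that launches the proposition.

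Next I would verify that the chain of minimal polynomials $(Q_r,Q_{r-1},\ldots,Q_0)$ is a saturated distinguished chain for $Q_r$ in the polynomial sense. By hypothesis each $(\theta_{i+1},\theta_i)$ is a $(K,v)$-distinguished pair; since $\theta_{i+1}$ is a root of $Q_{i+1}$ and $\theta_i$ is a root of $Q_i$, the definition of a distinguished pair of polynomials is met directly, so $(Q_{i+1},Q_i)$ is a distinguished pair of polynomials for every $0\le i\le r-1$. As $\theta_0\in K$, the bottom term $Q_0=X-\theta_0$ is linear, matching the degree-one base of such chains. Thus $(Q_r,\ldots,Q_0)$ is precisely the kind of chain appearing in Proposition~\ref{1.1.6}.

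Finally I would apply Proposition~\ref{1.1.6}(i) with $Q=Q_r$, which gives that each $Q_i$ with $0\le i\le r-1$ is an ABKP for $w$; together with the first step, every $Q_i$, $0\le i\le r$, is an ABKP for $w$, as claimed. I do not expect a serious obstacle, since the substance has been front-loaded into \cite{NSD} and Proposition~\ref{1.1.6}. The only point demanding care is the translation in the second paragraph, namely confirming that passing from each $\theta_i$ to its minimal polynomial $Q_i$ preserves the distinguished-pair structure, and in particular that $\deg Q_i=\deg\theta_i$ so that the strict degree inequalities are retained. I would also remark that the roots $\theta_i$ furnished by the given chain need not a priori coincide with the optimizing roots occurring in part (ii) of the proposition, but this is immaterial for the present assertion, since we invoke only part (i), whose conclusion makes no reference to optimizing roots.
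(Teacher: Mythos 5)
Your proposal is correct and follows essentially the same route as the paper: the paper also observes that the minimal polynomial $Q_r$ of $\theta$ is an ABKP for $w$ (citing \cite{NSD}) and then invokes Proposition~\ref{1.1.6}(i) to conclude that the remaining $Q_i$ are ABKPs for $w$. Your extra care in translating the element-level chain $(\theta_r,\ldots,\theta_0)$ into a polynomial-level saturated distinguished chain, and your remark that the given roots need not be optimizing roots, merely make explicit what the paper leaves implicit in its ``follows immediately'' step.
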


	\begin{definition}
		A family $\Lambda=\{Q_i\}_{i\in\Delta}$ of ABKPs for $w$ is said to be a complete set of ABKPs for $w$ if the following conditions are satisfied:
		\begin{enumerate}[(i)]
			\item $\delta(Q_i)\neq \delta(Q_j)$ for every $i\neq j\in\Delta.$
			\item $\Lambda$ is well-ordered with respect to the ordering given by $Q_i< Q_j$ if and only if  $\delta(Q_i)<\delta(Q_j)$ for every $i<j\in \Delta.$ 
			\item For any $f\in K[X],$ there exists some $Q_i \in \Lambda$ such that $w_{Q_i}(f)=w(f).$
		\end{enumerate}
	\end{definition}
	It is known that  \cite[Theorem 1.1]{NS}, every valuation $w$ on $K(X)$ admits a complete set of ABKPs. 
	Moreover, there is a complete set  $\Lambda=\{Q_i\}_{i\in\Delta}$ of ABKPs  for  $w$ having the following properties (cf.\  \cite[Remark 4.6]{MMS} and  \cite[proof of Theorem 1.1]{NS}).
	\begin{remark} \label{1.1.3}
		\begin{enumerate}[(i)]
		\item $\Delta=\bigcup_{j\in I}\Delta_j$ with $I=\{0,\ldots, N\}$ or $\mathbb{N}\cup\{0\},$ and for each $j\in I$ we have $\Delta_j=\{j\}\cup\vartheta_{j},$ where $\vartheta_j$ is an ordered set without a maximal element or is empty.
		\item $Q_0=X.$
		\item For all $j\in I\setminus \{0\}$ we have $j-1<i<j,$ for all $i \in\vartheta_{j-1}.$
		\item All  polynomials $Q_i$ with $i\in\Delta_j$ have the same degree and  have degree strictly  less than  the degree of the polynomials $Q_{i'}$ for every $i'\in\Delta_{j+1}.$
		\item For each $i<i'\in\Delta$ we have $w(Q_i)<w(Q_{i'})$ and $\delta(Q_i)<\delta(Q_{i'}).$
		\item Even though the set $\{Q_i\}_{i\in\Delta}$  of ABKPs  for $w$  is not unique, the cardinality of $I$ and the degree of an abstract key polynomial $Q_i$ for each $i\in I$ are uniquely determined by $w.$
		 \item The ordered set $\Delta$ has a maximal element if and only if the following holds:
		\begin{enumerate}[(a)]
			\item the set $I=\{0,\ldots,N\}$ is finite;
			\item $\Delta_N=\{N\},$ i.e., $\vartheta_N=\emptyset.$
		\end{enumerate}
		\end{enumerate}
	\end{remark}
From now on, we assume that all complete set of ABKPs in this paper satisfy the properties of Remark \ref{1.1.3}.

	Keeping in mind the above notations for a complete set  $\Lambda=\{Q_i\}_{i\in\Delta}$ of ABKPs for $w$  we have:
	\begin{definition}[\bf Limit key polynomials]\label{1.1.22}
		For	an element $i\in\Delta,$ we say that  $Q_i$ is a \emph{limit key polynomial} if the following conditions hold:
		\begin{enumerate}[(i)]
			\item $i\in I\setminus\{0\}.$
			\item $\vartheta_{i-1}\neq \emptyset.$
		\end{enumerate}
	\end{definition}

 In Theorem 1.23 of \cite{S-A}, it is proved that if $\{Q_i\}_{i\in\Delta}$ is a complete set of ABKPs for $w$ with $\vartheta_i=\emptyset,$ for every $i\in I,$ then there  exist some $n\in I\setminus\{0\}$ such that $Q_n$ has a saturated distinguished chain. In the next result, we show that the converse of this result also holds for a  valuation-transcendental extension.
\begin{theorem}\label{1.1.10}
	Let $(K,v)$ be a henselian valued field and $w=w_Q$  a valuation-transcendental extension of $v$ to $K(X).$ If $(Q_r=Q, Q_{r-1},\ldots, Q_0)$ is a saturated distinguished chain for $Q,$ then $\Lambda=\{Q_0\}\cup \{Q_1\}\cup\cdots\cup \{ Q_r\}$ is a complete set of ABKPs for $w.$
\end{theorem}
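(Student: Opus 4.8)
The plan is to verify the three defining conditions of a complete set of ABKPs directly, letting Proposition~\ref{1.1.6} carry essentially all of the work. First I would record that every member of $\Lambda$ is an ABKP for $w$: the top polynomial $Q_r=Q$ is an ABKP by hypothesis (since $w=w_Q$ is valuation-transcendental, this is consistent with Theorem~\ref{1.1.2}), while Proposition~\ref{1.1.6}(i) supplies that each $Q_i$ with $0\le i\le r-1$ is an ABKP for $w$ as well. Hence $\Lambda$ is a family of ABKPs for $w$.

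Conditions (i) and (ii) of the definition then read off from Proposition~\ref{1.1.6}(ii). That result gives the strict chain $\delta(Q_0)<\delta(Q_1)<\cdots<\delta(Q_r)$, so the optimal values are pairwise distinct, which is condition (i); moreover the relation defining the order on $\Lambda$, namely $Q_i<Q_j$ exactly when $\delta(Q_i)<\delta(Q_j)$, agrees with the natural order $i<j$ on the finite index set $\{0,\dots,r\}$, and a strict total order on a finite set is automatically a well-ordering, which is condition (ii).

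The decisive point for condition (iii) is that the valuation-transcendental hypothesis already identifies $w$ with the truncation at the top of the chain: since $w=w_Q=w_{Q_r}$, we have $w_{Q_r}(f)=w(f)$ for every $f\in K[X]$, so $Q_r$ serves as the required witness for all $f$ simultaneously. Thus completeness holds at once, and this is precisely where the hypothesis $w=w_Q$ is used. I do not expect a genuine obstacle in the argument: the only subtlety is recognizing that (iii) is immediate from $w=w_{Q_r}$ and does not call for any inductive, layer-by-layer verification, the entire difficulty having been absorbed into Proposition~\ref{1.1.6}.

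It remains to confirm that $\Lambda$ has the structural shape imposed in Remark~\ref{1.1.3}. Because each pair $(Q_{i+1},Q_i)$ is a $(K,v)$-distinguished pair, the definition of a distinguished pair forces $\deg Q_{i+1}>\deg Q_i$, so the degrees strictly increase along the chain and no two members of $\Lambda$ share a degree. This forces every layer $\Delta_j$ to be the singleton $\{j\}$ with $\vartheta_j=\emptyset$; consequently $\Lambda$ contains no limit key polynomials, the index set $\Delta=\{0,\dots,r\}$ has maximal element $r$, and $Q_0$, being the minimal polynomial of $\theta_0\in K$, is linear. These last checks are pure bookkeeping once Proposition~\ref{1.1.6} is in hand.
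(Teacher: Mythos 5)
Your proof is correct and follows essentially the same route as the paper: Proposition~\ref{1.1.6} supplies both the ABKP property of each $Q_i$ and the strict chain $\delta(Q_0)<\cdots<\delta(Q_r)$, and completeness (condition (iii)) rests on $w=w_{Q_r}$ --- the paper phrases this as a two-case split on $\deg f$, but that split collapses to exactly your observation. The only cosmetic difference is that the paper additionally invokes Corollary~\ref{1.2.10} to record $Q_i\in\psi(Q_{i-1})$ (hence $\vartheta_j=\emptyset$ for all $j$), whereas you reach the same structural conclusion directly from the strictly increasing degrees along a distinguished chain.
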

\bigskip
The notion of Okutsu frame was introduced by Okutsu in 1982 for local fields \cite{Oku}, and then studied by Nart in \cite{GME},  which was later generalized to henselian valued field of  arbitrary rank in \cite{Ol-Na} and  \cite{Ol}.

To define Okutsu frames, we first recall  some notations. Let $(K,v)$ be a henselian valued field of arbitrary rank.
Let $F$ in $K[X]$ be a monic irreducible polynomial of degree $n$ and $\theta\in\overline{K}$ be a root of $F.$
Consider the sequences:
\begin{align*}
	m_0=1<m_1<\cdots<m_r&=n \\
	\mu_0<\mu_1<\cdots<\mu_r&=\infty,
\end{align*}
defined in the following recurrent way:
\begin{align*}
	\mu_i:&= \max\{\bar{v}(\theta-\eta)\mid \eta\in\overline{K},~ \deg\eta=m_i\}~\text{for every $0\leq i\leq r-1,$}\\
	m_{i}&:=\min\{\deg\eta\mid \eta\in\overline{K}, ~\bar{v}(\theta-\eta)>\mu_{i-1}\}~\text{for every $1\leq i\leq r-1.$}
\end{align*}
Since $(K,v)$ is henselian, so these values does not  depend upon the choice of the root $\theta$ of $F.$ 
\begin{definition}[\bf Okutsu frames]
	For a monic irreducible polynomial $F\in K[X]$ having a root $\theta\in\overline{K},$  let $\theta_i\in\overline{K}$  be  such that $\deg\theta_i=m_i,$ $\bar{v}(\theta-\theta_i)=\mu_i,$ for every $0\leq i\leq r-1.$ If $F_i$ is the minimal polynomial of $\theta_i$ over $K,$ then the chain $[F_0, F_1,\ldots,F_{r-1}] $ of monic irreducible polynomials is called an \emph{Okutsu frame} of $F.$
\end{definition}
\begin{remark}
	It can be  shown that the above definition of an Okutsu frame  is equivalent to the one given in \cite{Ol-Na}.
\end{remark}
In the next result we give a connection between saturated distinguished chains and Okutsu frames. It may be pointed that a similar result is also proved in Theorem 2.6 of \cite{Ol} and Corollary 3.5 of \cite{JS} but our proof is elementary.
\begin{theorem}\label{1.1.9}
	Let $(K,v)$ be a henselian valued field and $F$ in $K[X]$ be a monic irreducible polynomial having a root $\theta$ in $\overline{K}.$  Then    $(F=F_{r}, F_{r-1},\ldots, F_0)$ is a saturated distinguished chain for $F$ if and only if $[F_0,F_1,\ldots,F_{r-1}]$ is an Okutsu frame of $F.$
\end{theorem}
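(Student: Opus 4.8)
The plan is to prove both implications at once by matching the data of the saturated distinguished chain to the recursively defined Okutsu invariants $(m_i,\mu_i)$. Write $\theta_r=\theta$ and let $\theta_0,\dots,\theta_{r-1}$ be the roots of $F_0,\dots,F_{r-1}$, so that $\theta_i$ is a root of $F_i$. The whole theorem reduces to establishing, for $0\le i\le r-1$, the two identities $\deg\theta_i=m_i$ and $\bar v(\theta-\theta_i)=\mu_i$: once these hold, the forward direction is just the recognition that the $F_i$ satisfy the defining data of an Okutsu frame, and the converse is the verification of axioms (i)--(iii) of a distinguished pair directly from the Okutsu recursion. Note $\theta_0\in K$ corresponds to $m_0=1$.

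The preliminary fact I would isolate first, as it is used in both directions, is the strict monotonicity $\delta_K(\theta_1)<\delta_K(\theta_2)<\cdots<\delta_K(\theta_r)$ of the main invariants along the chain. I would deduce it from axiom (iii): since $(\theta_{j+1},\theta_j)$ is distinguished and $\deg\theta_{j-1}<\deg\theta_j$, we get $\bar v(\theta_{j+1}-\theta_{j-1})<\bar v(\theta_{j+1}-\theta_j)=\delta_K(\theta_{j+1})$, and comparing this with $\bar v(\theta_j-\theta_{j-1})=\delta_K(\theta_j)$ through the ultrametric inequality forces $\delta_K(\theta_j)<\delta_K(\theta_{j+1})$. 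Feeding this into the telescoping identity $\theta-\theta_i=\sum_{j=i}^{r-1}(\theta_{j+1}-\theta_j)$, where the strictly smallest valuation $\delta_K(\theta_{i+1})$ is attained by a unique summand, yields the clean equality $\bar v(\theta-\theta_i)=\delta_K(\theta_{i+1})$ for all $i$.

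For the forward direction, the heart of the matter is a jump estimate: if $\bar v(\theta-\eta)>\delta_K(\theta_{i+1})$ then $\deg\eta\ge\deg\theta_{i+1}$. I would prove it by writing $\theta_{i+1}-\eta=(\theta_{i+1}-\theta)+(\theta-\eta)$; for $i\le r-2$ we have $\bar v(\theta_{i+1}-\theta)=\delta_K(\theta_{i+2})>\delta_K(\theta_{i+1})$, so $\bar v(\theta_{i+1}-\eta)>\delta_K(\theta_{i+1})=\max\{\bar v(\theta_{i+1}-\beta)\mid\deg\beta<\deg\theta_{i+1}\}$, which forces $\deg\eta\ge\deg\theta_{i+1}$ (the boundary case $i=r-1$ is immediate from the definition of $\delta_K(\theta)$). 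This single estimate shows simultaneously that $\theta_i$ is an optimal approximation to $\theta$ among elements of degree $\deg\theta_i$ and that the least degree improving on it is exactly $\deg\theta_{i+1}$. An induction starting from $\deg\theta_0=1=m_0$ then identifies $\deg\theta_i=m_i$ and $\bar v(\theta-\theta_i)=\mu_i$, so $[F_0,\dots,F_{r-1}]$ is an Okutsu frame.

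For the converse I would verify (i)--(iii) for each pair $(\theta_{i+1},\theta_i)$ using only the properties of $m_i,\mu_i$. Condition (i) is the strict increase of the $m_i$, and $\theta_0\in K$ since $m_0=1$. For the approximation statements I would repeatedly use $\bar v(\theta_{i+1}-\eta)=\min\bigl(\bar v(\theta_{i+1}-\theta),\,\bar v(\theta-\eta)\bigr)$ together with the Okutsu bounds $\bar v(\theta-\beta)\le\mu_i$ for $\deg\beta<m_{i+1}$ and $\bar v(\theta-\eta)\le\mu_{i-1}$ for $\deg\eta<m_i$: the first gives $\bar v(\theta_{i+1}-\theta_i)=\mu_i=\delta_K(\theta_{i+1})$, which is (ii) (with $\delta_K(\theta)=\mu_{r-1}$ at the top step coming from $m_r=n$), and the second gives (iii). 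The step I expect to be the main obstacle, and the conceptual core of the theorem, is the transfer between \emph{approximating the intermediate root} $\theta_{i+1}$, as the distinguished-pair axioms demand, and \emph{approximating the top root} $\theta$, as the Okutsu recursion demands; both transfers hinge on the single inequality $\bar v(\theta-\theta_{i+1})>\bar v(\theta-\theta_i)$ feeding the ultrametric equality, so I would factor that out as a lemma and invoke it in both directions.
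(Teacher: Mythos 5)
Your proposal is correct and takes essentially the same route as the paper's proof: both arguments rest on the ultrametric identities $\bar v(\theta-\theta_i)=\bar v(\theta_{i+1}-\theta_i)=\delta_K(\theta_{i+1})$, the strict monotonicity of the main invariants along the chain, and the transfer $\bar v(\theta_{i+1}-\eta)=\bar v(\theta-\eta)$ for $\deg\eta<\deg\theta_{i+1}$, used in both directions to pass between approximating $\theta_{i+1}$ and approximating $\theta$. The only organizational difference is that you re-derive the monotonicity statement (which the paper imports as Lemma \ref{2.1.7}, i.e.\ Lemma 5.1 of \cite{AK2}) via a telescoping sum, and package the forward direction as a contrapositive ``jump estimate''; the mathematical substance is identical.
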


\bigskip
We now recall  the  notion of key polynomials which was first introduced by Maclane  in 1936 and  later generalized by Vaqui\'e in 2007 (see \cite{M} and \cite{V}).  
\begin{definition}[\bf Key polynomials]
	For a valuation $w$ on $K(X)$ and polynomials $f,$ $g$ in $K[X],$ we say that
	\begin{enumerate}[(i)]
		\item  $f$ and $g$ are $w$-equivalent and write $f\thicksim_{w} g$ if $w(f-g)>w(f)=w(g).$
		\item $g$ is $w$-divisible by $f$ or  $f$  $w$-divides $g$ (denoted by $f\mid_{w}g$) if there exist some polynomial $h \in K[X]$ such that $g\thicksim_{w} fh.$
		\item  $f$ is $w$-irreducible, if for any $h,\, q\in K[X],$ whenever $f\mid_{w} hq,$ then either $f\mid_{w}h $ or $f\mid_{w}q.$
		\item $f$ is $w$-minimal, if for every polynomial $h\in K[X],$ whenever $f\mid_{w}h,$ then $\deg h\geq \deg f.$
		\item Any monic polynomial $f$  satisfying (iii) and (iv) is called a  \emph{key polynomial} for $w.$
	\end{enumerate}
	\end{definition}
In view of  Proposition 2.10 of  \cite{Ma},  any ABKP, $Q$ for $w$ is a key polynomial for $w_Q$ of minimal degree.
Let $KP(w)$ denote the set of all key polynomials for valuation $w.$ For any $\phi\in KP(w)$ we denote by $[\phi]_w$ the set of all key polynomials which are $w$-equivalent to $\phi.$
\begin{definition}[\bf Ordinary augmentation]\label{1.1.15}
	Let $\phi$ be a key polynomial for a valuation $w$ and $\gamma> w(\phi)$ be an element of a totally ordered abelian group $\Gamma$ containing $\Gamma_w$ as an ordered subgroup. A map $w': K[X]\longrightarrow \Gamma\cup\{\infty\}$ defined by 
	$$w'(f)=\min\{w(f_i)+i\gamma\},$$ where
	 $\sum_{i\geq 0}f_i \phi^i, $ $\deg f_i<\deg \phi,$ is the $\phi$-expansion of $f\in K[X],$  gives a valuation on $K(X)$ (see \cite[Theorem 4.2]{M}) called the   \emph{ordinary augmentation of $w$} (or an \emph{augmented valuation}) and is  denoted  by $w'=[w; \phi,\gamma].$
\end{definition}
 Clearly, $w(\phi)<w'(\phi)$ and
the polynomial $\phi$ is a key polynomial of minimal degree for the augmented valuation $w'$ (see \cite[Corollary 7.3]{EN}). 
\begin{definition}
	An extension $w$ of $v$ to $K(X)$ is called \emph{commensurable} if $\frac{\Gamma_{w}}{\Gamma_v}$ is a torsion group; otherwise it is called \emph{incommensurable}.
\end{definition}
Note that, if $\frac{\Gamma_{w}}{\Gamma_v}$ is a torsion group, then we have a canonical embedding $$\Gamma_{w}\hookrightarrow\Gamma_{v}\otimes\mathbb{Q}.$$

It is known that  any incommensurable extension $w$ of $v,$  is  value-transcendental (cf.\ \cite[Theorem 4.2]{EN}). Moreover,  if $\phi\in K[X]$ is a monic polynomial of minimal degree such that $w(\phi)\notin \Gamma_{v}\otimes\mathbb{Q},$ then $\phi$ is a key polynomial for $w$ and  the set of all key polynomials for $w$ is given by  
 $$\{\phi+g\mid g\in K[X], ~\deg g<\deg \phi,~ w(g)>w(\phi)\}=[\phi]_w.$$  In particular, every key polynomial for a value-transcendental extension have the same degree.
 On the other hand if $w$ is any commensurable extension of $v$ to $K(X),$ then $w$ is either valuation-algebraic or is residually transcendental. In fact
  any commensurable extension  which admits key polynomials are always residually transcendental  (see \cite[Theorem 4.6]{PP}). Hence the set of all key polynomials for a  valuation-algebraic extension $w$ is an empty set, i.e., $KP(w)=\emptyset.$ 
 
 Let  $w$ be a valuation on  $K(X),$  with value group $\Gamma_{w},$  which admits key polynomials.
 If $\phi$ is a key polynomial for $w$ of minimal degree, then we define
   $$\deg(w):=\deg \phi.$$  For any valuation $w'$ on $K(X)$ taking values in a subgroup of $\Gamma_{w},$ we say that $$w'\leq w~\text{ if and only if}~ w'(f)\leq w(f)~\forall~f\in K[X].$$   Suppose that $w'<w$ and consider the set
 $$\overline{\Phi}_{w',w}:=\{f\in K[X]\mid w'(f)<w(f)\}.$$ If $d$ is the smallest degree of a polynomial in $\overline{\Phi}_{w',w},$ then we define 
 $$\Phi_{w',w}:=\{g\in\overline{\Phi}_{w',w}\mid g~\text{is monic and  $\deg g=d$} \},$$
 i.e.,  the set of all monic polynomials $g\in K[X]$ of minimal degree such that $w'(g)<w(g).$  
 
 \begin{theorem}[Theorem 1.15, \cite{V}]\label{1.1.19}
 	Let $w$ be a valuation on $K(X)$ and $w'<w.$ Then 
  any $\phi\in \Phi_{w',w}$ is a key polynomial for $w'$ and  $$w'<[w';\phi,w(\phi)]\leq w.$$  For any non-zero polynomial $f\in K[X],$ the equality 
 $w'(f)=w(f)$ holds if and only if $\phi\nmid_{w'} f.$
\end{theorem}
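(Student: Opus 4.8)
The plan is to reduce the entire statement to a single technical lemma -- that $w'$-minimality of $\phi$ makes the $\phi$-expansion compute $w'$ without cancellation -- and to derive everything else from it by bookkeeping. Write $d=\deg\phi$. I would first record two elementary facts. Since $d$ is the minimal degree occurring in $\overline{\Phi}_{w',w}$, any $g\in K[X]$ with $\deg g<d$ satisfies $w'(g)=w(g)$; call this (P1). And $w'(\phi)<w(\phi)$ holds by the very definition of $\phi\in\Phi_{w',w}$; call this (P2).

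Next I would prove $w'$-minimality of $\phi$ directly from (P2). If $\phi\mid_{w'}h$, say $h\sim_{w'}\phi q$, then $w'(h-\phi q)>w'(h)=w'(\phi)+w'(q)$. Combining $w(h-\phi q)\ge w'(h-\phi q)>w'(h)$ with $w(\phi q)=w(\phi)+w(q)>w'(\phi)+w'(q)=w'(h)$ (where (P2) makes this strict) gives $w(h)>w'(h)$, so $h\in\overline{\Phi}_{w',w}$ and hence $\deg h\ge d=\deg\phi$. The very same computation yields the forward implication $\phi\mid_{w'}f\Rightarrow w'(f)<w(f)$.

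The crux is the expansion lemma: for $\phi$ that is $w'$-minimal, writing the $\phi$-expansion $f=\sum_i f_i\phi^i$ with $\deg f_i<d$, one has $w'(f)=\min_i\{w'(f_i)+iw'(\phi)\}$. The inequality $\ge$ is automatic; for $\le$ I would pass to the graded ring $\mathrm{gr}_{w'}$, which is a domain, and observe that any cancellation among the terms attaining the minimum would force the initial form of $\phi$ to divide some $\mathrm{in}_{w'}(f_{i_0})$ with $\deg f_{i_0}<d$, i.e.\ $\phi\mid_{w'}f_{i_0}$, contradicting the minimality just established. This lemma is the main obstacle; everything afterwards is routine.

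With the lemma in hand I would close as follows. If $\phi\nmid_{w'}f$, the expansion lemma shows the $i=0$ term attains the minimum, so $w'(f)=w'(f_0)=w(f_0)$ by (P1), while $w(\phi q)>w'(\phi q)\ge w'(f_0)$ forces $w(f)=w(f_0)=w'(f)$; contrapositively $w'(f)<w(f)\Rightarrow\phi\mid_{w'}f$. Together with the forward implication this gives $\phi\mid_{w'}f\iff w'(f)<w(f)$, which is exactly the last assertion of the theorem in contrapositive form, and from which $w'$-irreducibility is immediate: since $w$ and $w'$ are multiplicative, $\phi\mid_{w'}fg\iff w'(fg)<w(fg)\iff w'(f)<w(f)$ or $w'(g)<w(g)$. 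Hence $\phi$ is a key polynomial for $w'$, so $w''=[w';\phi,w(\phi)]$ is a legitimate augmentation. Finally $w''(f)=\min_i\{w'(f_i)+iw(\phi)\}=\min_i\{w(f_i)+iw(\phi)\}\le w(f)$ by (P1), so $w''\le w$; and $w''(f)\ge\min_i\{w'(f_i)+iw'(\phi)\}=w'(f)$ by the expansion lemma and (P2), with $w''(\phi)=w(\phi)>w'(\phi)$ making it strict, so $w'<w''\le w$.
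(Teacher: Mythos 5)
Your proposal is correct, but there is nothing in the paper to compare it against: Theorem \ref{1.1.19} is imported verbatim from Vaqui\'e (Theorem 1.15 of \cite{V}) and the paper gives no proof of it, so your argument stands as a self-contained substitute for the citation. Your route is the standard one in spirit (it is close to Vaqui\'e's and to the treatment in \cite{EN}), but it is organized efficiently: the order of steps --- first deduce $w'$-minimality of $\phi$ purely from $w'(\phi)<w(\phi)$ and the minimality of $\deg\phi$ in $\overline{\Phi}_{w',w}$, then use that minimality to prove the expansion identity $w'(f)=\min_i\{w'(f_i)+iw'(\phi)\}$, then harvest the equivalence $w'(f)=w(f)\iff\phi\nmid_{w'}f$, irreducibility, and the chain $w'<[w';\phi,w(\phi)]\leq w$ --- avoids circularity and makes each later step genuinely routine, as you claim. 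Your derivation of $w'$-irreducibility from multiplicativity of $w$ and $w'$ via the already-proved equivalence is particularly clean. The only place where you are terser than a complete proof requires is the expansion lemma: after cancellation gives $\sum_{i\in S}\mathrm{in}_{w'}(f_i)\,\mathrm{in}_{w'}(\phi)^i=0$ and you factor out $\mathrm{in}_{w'}(\phi)^{i_0}$ (using that the graded ring of a valuation is a domain), you must still check that the cofactor of $\mathrm{in}_{w'}(\phi)$ is itself the initial form of an actual polynomial $h$, so that divisibility of initial forms translates into $f_{i_0}\sim_{w'}\phi h$, i.e., into the paper's notion of $\phi\mid_{w'}f_{i_0}$; this holds because every nonzero homogeneous element of the graded ring is an initial form, but it should be said. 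With that sentence added, the proof is complete.
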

 \begin{corollary}[Corollary 2.5, \cite{EN1}]\label{1.1.16}
 	Let $w'<w$ be as above. Then
 	\begin{enumerate}[(i)]
 		\item  $\Phi_{w',w }=[\phi]_{w'}$ for all $\phi\in\Phi_{w',w }.$
 		\item If $w'<\mu<w$ is a chain  of valuations, then $ \Phi_{w',w }=\Phi_{w',\mu}.$
 		In particular, 
 		\begin{align*}
 			w'(f)=w(f) \iff w'(f)=\mu(f),\hspace{2pt}\forall f\in K[X]. 
 		\end{align*}
 	\end{enumerate}
 \end{corollary}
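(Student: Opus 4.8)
The plan is to deduce both parts from Vaqui\'e's Theorem~\ref{1.1.19}, using throughout the \emph{$\phi$-expansion criterion for divisibility}: if $\phi$ is a key polynomial for a valuation $u$ and $f=\sum_{i\geq 0}f_i\phi^i$ is the $\phi$-expansion of $f$ (so $\deg f_i<\deg\phi$), then $u(f)=\min_i\{u(f_i)+iu(\phi)\}$, and $\phi\mid_u f$ if and only if this minimum exceeds $u(f_0)$, i.e.\ is attained at some index $i\geq 1$. This is the standard computational face of a MacLane--Vaqui\'e key polynomial; I would either quote it or derive it in a preliminary line, since it turns the divisibility assertions of Theorem~\ref{1.1.19} into value comparisons that can be checked by hand.

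For (i) I would fix $\phi\in\Phi_{w',w}$ and prove the two inclusions. For $[\phi]_{w'}\subseteq\Phi_{w',w}$, let $\psi$ be a key polynomial for $w'$ with $\psi\sim_{w'}\phi$. Then $\psi$ is monic, and mutual $w'$-divisibility together with $w'$-minimality forces $\deg\psi=\deg\phi=d$, where $d$ is the minimal degree occurring in $\overline{\Phi}_{w',w}$; since $\deg(\psi-\phi)<d$ we have $w(\psi-\phi)=w'(\psi-\phi)$, while $\psi\sim_{w'}\phi$ gives $w'(\psi-\phi)>w'(\psi)=w'(\phi)$. Combining $w(\psi-\phi)>w'(\phi)$ with $w(\phi)>w'(\phi)$ and the ultrametric inequality yields $w(\psi)\geq\min\{w(\phi),w(\psi-\phi)\}>w'(\psi)$, so $\psi\in\Phi_{w',w}$. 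For the reverse inclusion, take $\psi\in\Phi_{w',w}$; Theorem~\ref{1.1.19} makes it a key polynomial for $w'$ of degree $d=\deg\phi$, and since $w'(\psi)<w(\psi)$ the same theorem gives $\phi\mid_{w'}\psi$. Feeding the $\phi$-expansion $\psi=\phi+(\psi-\phi)$ into the divisibility criterion, $\phi\mid_{w'}\psi$ reads exactly as $w'(\phi)<w'(\psi-\phi)$, which is equivalent to $\psi\sim_{w'}\phi$; hence $\psi\in[\phi]_{w'}$.

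For (ii) I would first reduce the displayed equivalence to the set equality $\Phi_{w',w}=\Phi_{w',\mu}$. Applying Theorem~\ref{1.1.19} to the two pairs $w'<w$ and $w'<\mu$ gives $w'(f)=w(f)\iff\phi\nmid_{w'}f$ and $w'(f)=\mu(f)\iff\phi_\mu\nmid_{w'}f$, for any $\phi\in\Phi_{w',w}$ and $\phi_\mu\in\Phi_{w',\mu}$; once these sets coincide we may take $\phi_\mu=\phi$, so the right-hand conditions agree and the ``in particular'' statement follows. To prove $\Phi_{w',w}=\Phi_{w',\mu}$, note that $\mu\leq w$ forces $\overline{\Phi}_{w',\mu}\subseteq\overline{\Phi}_{w',w}$, whence the minimal degrees satisfy $d_\mu\geq d_w$; once $d_\mu=d_w$ is known, $\Phi_{w',\mu}\subseteq\Phi_{w',w}$ (because $\mu(g)\leq w(g)$), and since by part~(i) each of the two nonempty sets is a single $w'$-equivalence class of key polynomials, this inclusion upgrades to equality. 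Thus everything comes down to the degree equality $d_\mu=d_w$, equivalently to showing that a fixed $\psi\in\Phi_{w',w}$ still satisfies $w'(\psi)<\mu(\psi)$.

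The main obstacle is precisely this last inequality, and my plan is to argue by contradiction. Assuming $\mu(\psi)=w'(\psi)$, I would exploit that $w'$, $\mu$ and $w$ all agree on polynomials of degree $<d_w$ (they are squeezed between $w'$ and $w$, which already agree there) and compare $\mu$ with $w'$ through the $\psi$-expansion $f=\sum_i f_i\psi^i$. Since $\psi$ is a key polynomial for $w'$, one has $w'(f)=\min_i\{w'(f_i)+iw'(\psi)\}$ with no cancellation among the minimal terms; the crux is to show that the same absence of cancellation persists for $\mu$, i.e.\ that the residue of $\psi$ stays free in $\mathrm{gr}_\mu$ under the hypothesis $\mu(\psi)=w'(\psi)$. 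If so, then $\mu(f)=\min_i\{\mu(f_i)+i\mu(\psi)\}=w'(f)$ for every $f$, contradicting $\mu>w'$. Establishing this freeness/no-cancellation step --- equivalently, controlling the degree of the quotient in the relation $\psi\mid_{w'}\phi_\mu$ so that mutual $w'$-divisibility, hence $\psi\sim_{w'}\phi_\mu$, is forced --- is the technical heart of the argument; the remainder is bookkeeping with the criterion above.
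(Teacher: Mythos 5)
The paper itself gives no proof of this statement---it is quoted verbatim from \cite{EN1}, Corollary 2.5---so your attempt has to stand on its own. Part (i) does stand: both inclusions are argued correctly, the degree equality via mutual $w'$-divisibility plus $w'$-minimality is right, and the ultrametric estimate $w(\psi)\geq\min\{w(\phi),w(\psi-\phi)\}>w'(\psi)$ is valid. One slip of wording in your preliminary criterion: ``$\phi\mid_u f$ iff the minimum exceeds $u(f_0)$'' is backwards (the minimum can never exceed $u(f_0)$); it should read ``iff $u(f_0)$ strictly exceeds the minimum, i.e.\ the minimum is attained only at indices $i\geq 1$.'' In the one place you apply it, to the expansion $\psi=\phi\cdot 1+(\psi-\phi)$, you use the correct form, so no damage is done there.

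Part (ii), however, has a genuine gap, and it is exactly the step you yourself label ``the technical heart'': you never prove that $\psi\in\Phi_{w',w}$ satisfies $w'(\psi)<\mu(\psi)$, equivalently that cancellation cannot occur when $\mu$ is evaluated on $\psi$-expansions under the hypothesis $\mu(\psi)=w'(\psi)$. The route you hint at---forcing $\psi\sim_{w'}\phi_\mu$ by ``controlling the degree of the quotient'' in $\psi\mid_{w'}\phi_\mu$---cannot close it: if $d_w<d_\mu$, then $w'$-minimality of $\phi_\mu$ makes $\phi_\mu\nmid_{w'}\psi$ automatic, so the only two divisibility facts available ($\psi\mid_{w'}\phi_\mu$ from Theorem \ref{1.1.19} for the pair $w'<w$, and $\phi_\mu\nmid_{w'}\psi$ from the pair $w'<\mu$) are mutually consistent and yield no contradiction; nothing in them forces the degrees to agree. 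The missing idea is a \emph{third} application of Theorem \ref{1.1.19}, to the pair $\mu<w$. Under your contradiction hypothesis, $\mu(\psi)=w'(\psi)<w(\psi)$, so $\psi\in\overline{\Phi}_{\mu,w}$; moreover every $f$ with $\deg f<d_w$ has $w'(f)=w(f)$, hence $\mu(f)=w(f)$ by squeezing, so $d_w$ is the minimal degree in $\overline{\Phi}_{\mu,w}$ and $\psi\in\Phi_{\mu,w}$. Theorem \ref{1.1.19} then makes $\psi$ a key polynomial \emph{for $\mu$}, which is precisely the ``freeness in $\mathrm{gr}_\mu$'' you wanted: the same truncation property you invoke for $w'$ now applies to $\mu$, giving $\mu(f)=\min_i\{\mu(f_i)+i\mu(\psi)\}=\min_i\{w'(f_i)+iw'(\psi)\}=w'(f)$ for every $f$, i.e.\ $\mu=w'$, contradicting $w'<\mu$. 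With this step inserted, the rest of your reduction (equality of minimal degrees, the inclusion $\Phi_{w',\mu}\subseteq\Phi_{w',w}$ upgraded to equality via part (i), and the ``in particular'' statement) is correct.
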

Keeping in mind the above results, 
   we can now define 
  $$\deg(\Phi_{w',w }):=\deg(\phi) \hspace{2mm}\forall \phi\in\Phi_{w',w }.$$
 \begin{remark}\label{1.1.17}
 	Let $w$ be a valuation of $K(X).$
 	 If $w'=w_{Q'},$ for some ABKP, $Q'$ for $w,$ then $\Phi_{w',w}=\psi(Q').$ 
 \end{remark}

Consider the group $\mathbb{Z}\times(\Gamma_{v}\otimes\mathbb{Q})$ equipped with the lexicographical ordering containing $\mathbb{Z}\times\Gamma_{v}$ as an ordered subgroup. Let $w_{-\infty}: K[X]\longrightarrow(\mathbb{Z}\times\Gamma_{v})\cup\{\infty\}$ be the valuation defined by 
$$w_{-\infty}(f)=(-\deg f, v(a_n)),$$ where $a_n$ is the leading coefficient of the polynomial $f\in K[X].$    Since the value group, $\mathbb{Z}\times\Gamma_{v}$ is torsion free over $\Gamma_{v},$ so the extension $w_{-\infty}$ of  $v$ is incommensurable and in view of Lemma 4.1 of \cite{EN}, the set of all key polynomials for $w_{-\infty}$ is 
$$KP(w_{-\infty})=\{X+a\mid a\in K\}=[X]_{w_{-\infty}}.$$
Fix an  order-preserving embedding $\Gamma_v\otimes\mathbb{Q}\hookrightarrow \mathbb{Z}\times(\Gamma_{v}\otimes\mathbb{Q})$ of ordered  abelian groups, mapping $\gamma\mapsto  (0,\gamma).$
 If $w$ is any commensurable extension of $v$ to $K(X),$ then from the above embedding we have $w_{-\infty}<w,$ and $w_{-\infty}$ is called the \emph{minimal extension} of $v$ to $K(X).$  Moreover  the augmentation of $w_{-\infty}$ with respect to the key polynomial $\phi_0=X+a$ for some $a\in K,$ defined in a natural way, is denoted by  $w_0$  (see \cite[Subsection 2.2]{Ol-Na}).

\begin{definition}
A valuation $w$ is said to be \emph{inductive} if it is attained after a finite number of augmentation steps starting with the minimal valuation:
\begin{align}\label{1.1}
	w_{-\infty}\xrightarrow{\phi_0,\gamma_0}w_0\xrightarrow{\phi_1,\gamma_1} w_1\xrightarrow{\phi_2,\gamma_2}\cdots \longrightarrow w_{r-1}\xrightarrow{\phi_r,\gamma_r} w_r=w,
\end{align}
where $\gamma_0,\gamma_1,\ldots,\gamma_r\in \Gamma_{v}\otimes\mathbb{Q}$ and   $w_i=[w_{i-1}, \phi_i, \gamma_i],$ $1\leq i\leq r.$ 
\end{definition}
 The minimal valuation $w_{-\infty}$ is  not considered  an inductive valuation and  thus, inductive valuations are commensurable  which admits key polynomials.  In view of Corollary 7.3 of \cite{EN}, $\phi_i$ is a key polynomial for $w_i$ of minimal degree. 
 \begin{definition}[\bf Optimal Maclane chain]
 	A  chain of augmentations of the form  (\ref{1.1}) such that, 
 	$$1=m_0\mid m_1\mid\cdots\mid m_r,~ m_0<m_1<\cdots<m_r,$$ where  $m_i=\deg \phi_i,$  $0\leq i\leq r,$ is called the \emph{optimal Maclane chain} of $w.$
 \end{definition}
 It is known that all inductive valuations admit an optimal Maclane chain \cite{Ol-Na}. These chains are not unique, but 
\begin{itemize}
	\item the intermediate valuations $w_0,~w_1,\ldots,w_{r-1},$
	\item the degrees $m_0,~ m_1, \ldots, m_r$ of the key polynomials,
	\item  $\gamma_0,\gamma_1,\ldots,\gamma_r$ satisfies $\gamma_i=w_i(\phi_i)=w(\phi_i)$ for all $0\leq i\leq r,$ 
\item 	$\lambda_0=\gamma_0=w(\phi_0), ~ 	\lambda_i=w_i (\phi_i)-w_{i-1}(\phi_i)>0,~ 1\leq i\leq r.$
	\end{itemize}
 are some intrinsic invariants of $w.$

We now give a relation between optimal Maclane chains and complete set of ABKPs  for a valuation $w.$ We first give a precise complete set $\{Q_i\}_{i\in\Delta}$ of ABKPs using a given optimal Maclane chain. 
\begin{theorem}\label{1.1.8}
	Let $(K,v)$ be a valued field and $w$ be an extension of $v$ to $K(X).$  If $w$ has an optimal Maclane chain of the form (\ref{1.1}), then  $\{\phi_0\}\cup\{\phi_1\}\cup\cdots\cup\{\phi_r\}$ is a complete set of ABKPs for $w.$    
\end{theorem}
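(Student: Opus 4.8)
The plan is to verify the three defining conditions of a complete set of ABKPs in turn: that each $\phi_i$ is an ABKP for $w$, that the optimal values $\delta(\phi_i)$ are strictly increasing (yielding conditions (i) and (ii)), and finally the completeness condition (iii). Everything hinges on first identifying the $\phi_i$-truncation of $w$ with the intermediate valuation $w_i$.

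The first step I would carry out is the key identity
\[
w_{\phi_i}=w_i \qquad (0\le i\le r).
\]
Writing $f\in K[X]$ in its $\phi_i$-expansion $f=\sum_j f_j\phi_i^{\,j}$ with $\deg f_j<m_i$, we have by definition $w_{\phi_i}(f)=\min_j\{w(f_j)+j\,w(\phi_i)\}$, and $w(\phi_i)=\gamma_i=w_i(\phi_i)$ is one of the intrinsic invariants of the optimal chain. The crux is to show that $w(f_j)=w_{i-1}(f_j)$ for every coefficient $f_j$: along the chain $w_{i-1}<w_i<\cdots<w_r=w$, each step $w_k<w_{k+1}$ has $\Phi_{w_k,w_{k+1}}$ consisting of polynomials of degree $m_{k+1}\ge m_i>\deg f_j$, so the $w_k$-minimality of $\phi_{k+1}$ forces $\phi_{k+1}\nmid_{w_k} f_j$, whence $w_k(f_j)=w_{k+1}(f_j)$ by Theorem \ref{1.1.19}; chaining these equalities gives $w(f_j)=w_{i-1}(f_j)$. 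Substituting back and comparing with the augmentation formula $w_i(f)=\min_j\{w_{i-1}(f_j)+j\gamma_i\}$ yields the identity. I expect this computation---carefully tracking that low-degree coefficients retain the same value throughout the chain, via minimality of the key polynomials and the strictly increasing degrees $m_0<\cdots<m_r$---to be the main obstacle; the rest is then essentially mechanical.

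Next I would prove by induction on $i$ that each $\phi_i$ is an ABKP for $w$ and that $\delta(\phi_{i-1})<\delta(\phi_i)$. The base case is immediate, since $\phi_0$ is linear and hence an ABKP. Assuming $\phi_{i-1}$ is an ABKP, the identity above gives $w_{\phi_{i-1}}=w_{i-1}$, so Remark \ref{1.1.17} yields $\psi(\phi_{i-1})=\Phi_{w_{i-1},w}$. Applying Corollary \ref{1.1.16}(ii) with the intermediate valuation $w_i$ (for $i<r$, and trivially for $i=r$) gives $\Phi_{w_{i-1},w}=\Phi_{w_{i-1},w_i}$; and since $\phi_i$ satisfies $w_{i-1}(\phi_i)<\gamma_i=w_i(\phi_i)$ and is a $w_{i-1}$-key polynomial, it is the minimal-degree element of $\overline{\Phi}_{w_{i-1},w_i}$ (any strictly smaller-degree witness would be $w_{i-1}$-divisible by $\phi_i$, contradicting minimality), so $\phi_i\in\Phi_{w_{i-1},w_i}=\psi(\phi_{i-1})$. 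Lemma \ref{1.2.6} then immediately gives that $\phi_i$ is an ABKP for $w$ with $\delta(\phi_{i-1})<\delta(\phi_i)$, completing the induction.

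Finally I would assemble the three defining conditions. The strict chain $\delta(\phi_0)<\delta(\phi_1)<\cdots<\delta(\phi_r)$ gives condition (i), and shows that the finite set $\Lambda=\{\phi_0,\ldots,\phi_r\}$, ordered by its $\delta$-values, is trivially well ordered, which is condition (ii). For condition (iii), the case $i=r$ of the identity reads $w_{\phi_r}=w_r=w$, so $w_{\phi_r}(f)=w(f)$ for \emph{every} $f\in K[X]$; thus $\phi_r$ already witnesses completeness. This is consistent with the fact that an inductive $w$ is residually transcendental, hence valuation-transcendental, so that by Theorem \ref{1.1.2} one has $w=w_{\phi_r}$ for the ABKP $\phi_r$. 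Hence $\Lambda$ is a complete set of ABKPs for $w$.
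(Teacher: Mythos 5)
Your proposal is correct, but it reaches the two central facts --- the identity $w_{\phi_i}=w_i$ and the fact that each $\phi_i$ is an ABKP for $w$ --- by a different mechanism than the paper. The paper gets both at once from Theorem \ref{2.1.2}: since $\phi_i$ is a key polynomial of minimal degree for $w_i$ and $w_i(\phi_i)=w(\phi_i)$, one has $\phi_i\notin\Phi_{w_i,w}$ with $\deg\phi_i=\deg(w_i)$, so case (ii) of that theorem yields simultaneously that $\phi_i$ is an ABKP for $w$ and that $w_{\phi_i}=w_i$; a second application of the same theorem then gives $\phi_i\in\Phi_{w_{\phi_{i-1}},w}=\psi(\phi_{i-1})$. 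You instead prove $w_{\phi_i}=w_i$ by hand, tracking that coefficients of degree less than $m_i$ keep the same value along the chain of augmentations, and then obtain ABKP-ness inductively from $\phi_i\in\psi(\phi_{i-1})$ together with Lemma \ref{1.2.6}. Your route is more self-contained (it never invokes Theorem \ref{2.1.2}, only the augmentation formula, Theorem \ref{1.1.19}, Corollary \ref{1.1.16}, Remark \ref{1.1.17} and Lemma \ref{1.2.6}), at the cost of the explicit expansion computation; the paper's route is shorter given the cited comparison theorem and needs no induction on $i$. Both arguments produce the same structural byproduct ($\phi_i\in\psi(\phi_{i-1})$, i.e.\ $\vartheta_i=\emptyset$ in the sense of Remark \ref{1.1.3}) and the same completeness argument via $w_{\phi_r}=w_r=w$.

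Two small points of caution in your write-up, neither of which is a genuine gap. First, applying Theorem \ref{1.1.19} to the pair $w_k<w_{k+1}$ presupposes $\phi_{k+1}\in\Phi_{w_k,w_{k+1}}$; this is true (Remark \ref{1.1.20}(iii)), but the cleanest justification, both there and for your claim that $\phi_i$ has minimal degree in $\overline{\Phi}_{w_{i-1},w_i}$, is the augmentation formula itself: if $\deg g<\deg\phi_{k+1}$, the $\phi_{k+1}$-expansion of $g$ is $g$ alone, so $w_{k+1}(g)=w_k(g)$ directly, with no divisibility argument. Second, for $i=0$ your induction step should be read with $w_{-1}$ interpreted as the minimal valuation $w_{-\infty}$, where the coefficients are constants and the computation is immediate; this is exactly how the paper handles its $r=0$ case.
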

In the next result we gives some necessary conditions under which an optimal Maclane chain is obtained using a complete set of ABKPs for $w.$ 
\begin{theorem}\label{1.1.21}
	Let $(K,v)$ and  $(K(X),w)$ be as in the above theorem. Let $\{Q_i\}_{i\in\Delta}$ be a complete set of ABKPs for $w$  such that $\Delta$ has maximal element, say, $N$ and  $\vartheta_{i}=\emptyset$ for every $0\leq i\leq N.$    Then $w$ has an optimal Maclane chain,  if $w(Q_N)\in\Gamma_v\otimes\mathbb{Q}.$ 
\end{theorem}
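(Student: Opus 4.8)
The plan is to exhibit $w$ as an inductive valuation by threading a finite chain of ordinary augmentations through the truncations $w_{Q_i}$, and then to invoke the known fact that every inductive valuation admits an optimal Maclane chain; this route lets me avoid verifying the divisibility $m_i\mid m_{i+1}$ by hand. Since $\Delta$ has maximal element $N$ and $\vartheta_i=\emptyset$ for all $0\le i\le N$, Remark \ref{1.1.3} gives $\Delta=\{0,1,\ldots,N\}$, $Q_0=X$, and $\deg Q_0<\deg Q_1<\cdots<\deg Q_N$. For each $0\le i<N$ one has $w_{Q_i}<w$, so $\psi(Q_i)\neq\emptyset$; by Lemma \ref{1.2.6} its members are ABKPs of degree $\alpha(Q_i)$ with optimal value exceeding $\delta(Q_i)$, and because $\vartheta_i=\emptyset$ the successor $Q_{i+1}$ in the well-ordered set is exactly such a member, whence $Q_{i+1}\in\psi(Q_i)$ and $\deg Q_{i+1}=\alpha(Q_i)$. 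The computational heart, which I expect to be the main obstacle, is the augmentation identity
$$w_{Q_{i+1}}=[w_{Q_i};\,Q_{i+1},\,w(Q_{i+1})],\qquad 0\le i<N.$$
To prove it I would write the $Q_{i+1}$-expansion $f=\sum_j a_jQ_{i+1}^j$; each coefficient satisfies $\deg a_j<\deg Q_{i+1}=\alpha(Q_i)$, so by the defining property of $\alpha(Q_i)$ (together with $w_{Q_i}\le w$ always) we get $w_{Q_i}(a_j)=w(a_j)$, and substituting this into the definitions of the augmented valuation and of the truncation $w_{Q_{i+1}}$ makes the two sides agree term by term. Since $Q_{i+1}\in\psi(Q_i)=\Phi_{w_{Q_i},w}$ (Remark \ref{1.1.17}), Theorem \ref{1.1.19} shows $Q_{i+1}$ is a key polynomial for $w_{Q_i}$ with $w(Q_{i+1})>w_{Q_i}(Q_{i+1})$ and $w_{Q_i}<w_{Q_{i+1}}\le w$, so every step is a genuine ordinary augmentation.

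Combining these inequalities gives $w_{Q_0}<w_{Q_1}<\cdots<w_{Q_N}\le w$. I would then use property (iii) of a complete set to force equality at the top: for arbitrary $f\in K[X]$ pick $Q_i$ with $w_{Q_i}(f)=w(f)$; as $w_{Q_i}\le w_{Q_N}\le w$ this squeezes $w_{Q_N}(f)=w(f)$, and since $f$ is arbitrary, $w=w_{Q_N}$.

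The hypothesis $w(Q_N)\in\Gamma_v\otimes\mathbb{Q}$ enters precisely to guarantee commensurability. Here $Q_N$ is an ABKP for $w$, hence a key polynomial for $w_{Q_N}=w$ of minimal degree. If $w$ were incommensurable it would be value-transcendental, in which case every key polynomial of $w$ lies in $[\phi]_w$ and has $w$-value equal to $w(\phi)\notin\Gamma_v\otimes\mathbb{Q}$; in particular $w(Q_N)\notin\Gamma_v\otimes\mathbb{Q}$, contradicting the hypothesis. Hence $w$ is commensurable and $\Gamma_w\hookrightarrow\Gamma_v\otimes\mathbb{Q}$. Consequently $w_{-\infty}<w$, and since $w_{-\infty}(X)<w(X)$ we may form $w_0:=[w_{-\infty};X,w(X)]$, which a direct computation identifies with $w_{Q_0}$; moreover every $\gamma_i:=w(Q_i)$ lies in $\Gamma_w\subseteq\Gamma_v\otimes\mathbb{Q}$.

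Assembling the pieces, the chain
$$w_{-\infty}\xrightarrow{Q_0,\,w(Q_0)}w_{Q_0}\xrightarrow{Q_1,\,w(Q_1)}w_{Q_1}\longrightarrow\cdots\xrightarrow{Q_N,\,w(Q_N)}w_{Q_N}=w$$
is a finite chain of ordinary augmentations with every $\gamma_i\in\Gamma_v\otimes\mathbb{Q}$, so $w$ is inductive. The conclusion that $w$ admits an optimal Maclane chain then follows from the cited result that all inductive valuations do. If one wished to show this very chain is already optimal, it would remain to verify $\deg Q_i\mid\deg Q_{i+1}$, which follows from $\deg Q_{i+1}=\deg\Phi_{w_{Q_i},w}$ being the minimal degree of a value-increasing polynomial; but this refinement is not needed for the existence statement.
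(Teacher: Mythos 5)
Your proof is correct, and its skeleton coincides with the paper's: both thread the chain of ordinary augmentations $w_{-\infty}\rightarrow w_{Q_0}\rightarrow w_{Q_1}\rightarrow\cdots\rightarrow w_{Q_N}=w$ through the truncations, using the structural fact that $\vartheta_i=\emptyset$ forces $Q_{i+1}\in\psi(Q_i)$, together with the identity $w_{Q_{i+1}}=[w_{Q_i};Q_{i+1},w(Q_{i+1})]$ (which you verify by hand from $w_{Q_i}(a_j)=w(a_j)$ for coefficients of degree less than $\alpha(Q_i)$, whereas the paper simply cites Proposition \ref{2.1.6}(v)), and both close the loop $w_{Q_N}=w$ by the same squeeze via completeness. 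You diverge at two points. First, on why the values $\gamma_i=w(Q_i)$ lie in $\Gamma_v\otimes\mathbb{Q}$: the paper applies Proposition \ref{2.1.6}(i) --- each proper truncation $w_{Q_i}<w$ is residually transcendental, hence commensurable, so $w(Q_i)\in\Gamma_v\otimes\mathbb{Q}$ for $i<N$, with the hypothesis covering $i=N$ --- while you first show $w$ itself is commensurable (if $w$ were incommensurable it would be value-transcendental, and then every key polynomial for $w$, in particular $Q_N$, would have value outside $\Gamma_v\otimes\mathbb{Q}$, contradicting the hypothesis) and then get all $\gamma_i\in\Gamma_w\subseteq\Gamma_v\otimes\mathbb{Q}$ at once; both arguments are sound. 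Second, and more substantively, the finishing move: the paper verifies the divisibility $\deg Q_{i-1}\mid\deg Q_i$ directly, citing Theorem 1.12(ii) of \cite{S-A}, so that the constructed chain of truncations is itself an optimal Maclane chain; you instead stop at ``the chain witnesses that $w$ is inductive'' and invoke the stated fact that all inductive valuations admit an optimal Maclane chain \cite{Ol-Na}. Your route buys a shorter argument that bypasses the external divisibility lemma, at the cost of a non-constructive conclusion (existence of some optimal chain, rather than optimality of the explicit chain in hand); since the theorem only asserts existence, this is a legitimate trade, though the paper's version yields the stronger and more informative statement.
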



\bigskip

Let $(K,v)$ be a valued field and $w$ be an extension of $v$ to $K(X).$ We now recall the definition of a  continuous family of augmentations of $w$  (\cite{V, EN}).
\begin{definition}\label{1.1.14}
	Let $w$ be a valuation on $K(X)$ admitting key polynomials. Then a continuous family of augmentations of $w$ is a family of ordinary augmentations of $w$  $$\mathcal{W} =(\rho_i=[w;\chi_i,\gamma_i])_{i\in	\mathbf{A}},$$ indexed by a set $\mathbf{A},$ satisfying the following conditions:
	\begin{enumerate}[(i)]
		\item The set $\mathbf{A}$ is totally ordered  and has no maximal element.
		\item All  key polynomials $\chi_i\in KP(w)$ have the same degree.
		\item For all $i<j$ in $\mathbf{A},$ $\chi_j$ is a  key polynomial for $\rho_i$ and satisfies:
		$$\chi_j\not\thicksim_{\rho_i}\chi_i~\text{and}~ \rho_j=[\rho_i;\chi_j,\gamma_j].$$
		The common degree $m=\deg\chi_i,$ for all $i,$ is called the \emph{stable degree} of the family $\mathcal{W}$ and is denoted by $\deg (\mathcal{W}).$
	\end{enumerate}
\end{definition}
A polynomial $f$ in $K[X]$ is said to be \emph{stable} with respect to the family $\mathcal{W}=(\rho_i)_{i\in\mathbf{A}}$ (or is $\mathcal{W}$-stable) if 
$$\rho_i(f)=\rho_{i_0}(f),~ \text{for every}~ i\geq i_0$$ for some index $i_0\in\mathbf{A}.$ This stable value is denoted by $\rho_\mathcal{W}(f).$ By Corollary \ref{1.1.16} (ii), a polynomial $f\in K[X]$  is $\mathcal{W}$-unstable if and only if  
$$\rho_i(f)<\rho_j(f)\hspace{5pt} \forall i<j.$$  The minimal degree of an $\mathcal{W}$-unstable polynomial is denoted by $m_{\infty}.$ If  all polynomials are $\mathcal{W}$-stable, then we set $m_{\infty}=\infty$. 
\begin{remark}\label{1.1.18}
The following properties hold for any  continuous family $\mathcal{W} =(\rho_i)_{i\in\mathbf{A}}$ of augmentations (see  \cite[p.\ 9]{EN1}):
\begin{enumerate}[(i)]
	\item The mapping defined by $i\rightarrow\gamma_i$ and $i\rightarrow\rho_i$ are isomorphisms of ordered sets between $\mathbf{A}$ and $\{\gamma_i\mid i\in\mathbf{A}\},$ $\{\rho_i\mid i\in\mathbf{A}\},$ respectively.
	\item For all $i\in\mathbf{A},$ $\chi_i$ is a key polynomial for $\rho_i$ of minimal degree.
	\item For all $i, j\in\mathbf{A},$ $\rho_i(\chi_j)=\min\{\gamma_i,\gamma_j\}.$ Hence, all the polynomials $\chi_i$ are stable.
	\item $\Phi_{\rho_i,\rho_j}=[\chi_j]_{\rho_i}$ $\forall$ $i<j\in\mathbf{A}.$
	\item All valuations $\rho_i$ are residually transcendental.
	\item All the value groups $\Gamma_{\rho_i}$ coincide and   the common value group is denoted by $\Gamma_{\mathcal{W}}.$ 
\end{enumerate}
\end{remark}
\begin{definition}[\bf Maclane-Vaqui\'e limit key polynomials]
	Let $\mathcal{W}$ be a continuous family of augmentations of a valuation $w.$ 
 A   monic $\mathcal{W}$-unstable polynomial of minimal degree is called \emph{Maclane-Vaqui\'e  limit key polynomial} (abbreviated as MLV) for $\mathcal{W}.$
\end{definition}
  We denote by $KP_{\infty}(\mathcal{W})$  the set of all MLV limit key polynomials. Since the product of stable polynomials is stable, so all MLV limit key polynomials are irreducible in $K[X].$ 
 \begin{definition}
 	We say that $\mathcal{W}$ is an \emph{essential continuous family} of augmentations if $m<m_{\infty}<\infty.$
 \end{definition}
\begin{remark}
	All essential continuous family of augmentations admit MLV limit key polynomials.
\end{remark}
Let $\mathcal{W}$ be an essential continuous family of augmentations of a valuation $w$
and $Q\in KP_{\infty}(\mathcal{W})$ be any MLV limit key polynomial. Then any polynomial $f$ in $K[X]$ with $\deg f<\deg Q$ is  $\mathcal{W}$-stable.
\begin{definition}[\bf Limit augmentation]
	Let   $Q$ be any MLV limit key polynomial for an essential continuous family of augmentations $\mathcal{W} =(\rho_i)_{i\in\mathbf{A}}$  and   $\gamma>\rho_i(Q),$ for all $i\in\mathbf{A},$ be an element of  a totally ordered abelian group $\Gamma\cup\{\infty\}$  containing $\Gamma_{\mathcal{W}}$ as an  ordered subgroup. Then a map $w': K[X]\longrightarrow\Gamma\cup\{\infty\}$ defined by
	$$w'(f)=\min_{i\geq 0}\{\rho_\mathcal{W}(f_i)+i\gamma\},$$
where  $\sum_{i\geq 0} f_iQ^i,$ $\deg f_i<\deg Q,$ is the $Q$-expansion of $f\in K[X],$ gives a valuation on $K(X)$ and is called  the \emph{limit augmentation} of $\mathcal{W}.$ 
\end{definition}
 Note that $w'(Q)=\gamma$ and $\rho_i<w'$ for all $i\in\mathbf{A}.$ If $\gamma<\infty,$ then $Q$ is a key polynomial for $w'$ of minimal degree \cite[Corollary 7.13]{EN}).
\vspace{.20pt}

We now recall the definition of Maclane-Vaqui\'e chains given by  Nart in \cite{EN1}. For this,  we first
consider a finite, or countably infinite, chain of mixed augmentations
\begin{align}\label{1.12}
	w_0\xrightarrow{\phi_1,\gamma_1} w_1\xrightarrow{\phi_2,\gamma_2}\cdots \longrightarrow w_{n}\xrightarrow{\phi_{n+1},\gamma_{n+1}} w_{n+1}\longrightarrow\cdots
\end{align}
in which every valuation is an augmentation of the previous one and is of one of the following type:
\begin{itemize}
	\item  Ordinary augmentation: $w_{n+1}=[w_n; \phi_{n+1},\gamma_{n+1}],$ for some $\phi_{n+1}\in KP(w_n).$ 
	\item Limit augmentation:  $w_{n+1}=[\mathcal{W}_n; \phi_{n+1},\gamma_{n+1}],$ for some $\phi_{n+1}\in KP_{\infty}(\mathcal{W}_n),$ where $\mathcal{W}_n$ is an essential continuous family of augmentations of $w_n.$ 
\end{itemize}
Let $\phi_0\in KP(w_0)$ be a key polynomial of minimal degree and let $\gamma_0=w_0(\phi_0).$
 Then, in view of  Theorem \ref{1.1.19},  Proposition 6.3 of \cite{EN}, Proposition 2.1, 3.5 of \cite{EN1} and Corollary \ref{1.1.16},  we have the   following properties of a  chain (\ref{1.12}) of augmentations.
 \begin{remark}\label{1.1.20}
\begin{enumerate}[(i)]
	\item  $\gamma_n=w_n(\phi_n)<\gamma_{n+1}.$
	\item For all $n\geq 0$ for which $\gamma_n<\infty,$ the polynomial $\phi_n$ is a key polynomial for $w_n$ of minimal degree and therefore
	$$\deg(w_n)=\deg \phi_n~\text{divides}~\deg (\Phi_{w_n,w_{n+1}}).$$
	\item \begin{equation*}
		\Phi_{w_n,w_{n+1}}=
		\begin{cases}
			[\phi_{n+1}]_{w_n},~ \text{if}~ w_n\rightarrow w_{n+1}~ \text{is ordinary augmentation}\\
		\displaystyle\Phi_{w_n,\mathcal{W}_n}=[\chi_i]_{w_n},~\text{ if}~ w_n\rightarrow w_{n+1} ~\text{is limit augmentation}.
		\end{cases}
	\end{equation*}
	\item \begin{equation*}
			\deg(\Phi_{w_n,w_{n+1}})=
			\begin{cases}
				\deg\phi_{n+1},~\text{if}~ w_n\rightarrow w_{n+1}~ \text{is ordinary augmentation}\\
				\displaystyle\deg(\mathcal{W}_n),~\text{ if}~ w_n\rightarrow w_{n+1} ~\text{is limit augmentation}.
			\end{cases}
	\end{equation*}
\end{enumerate}
\end{remark}
Let $ a \in K,$ $\gamma\in\Gamma\cup\{\infty\}.$ Then the valuation defined by the pair $(a,\gamma)$
  is called a \emph{depth zero valuation}.
\begin{definition}[\bf Maclane-Vaqui\'e chains]\label{1.1.13}
	A finite, or countably infinite chain of mixed augmentations as in (\ref{1.12}) is called a \emph{Maclane-Vaqui\'e  chain} (abbreviated as MLV), if every augmentation step satisfies:
	\begin{itemize}
		\item if $ w_n\rightarrow w_{n+1}$ is ordinary augmentation, then $\deg (w_n)<\deg(\Phi_{w_n,w_{n+1}}).$
		\item if $ w_n\rightarrow w_{n+1}$ is limit augmentation, then $\deg (w_n)=\deg(\Phi_{w_n,w_{n+1}})$ and $\phi_n\notin\Phi_{w_n,w_{n+1}}.$
	\end{itemize}
	A  Maclane-Vaqui\'e  chain is said to be \emph{complete} if  $w_0$ is a  depth zero valuation.
\end{definition}
In the following result, using  complete set of ABKPs for a valuation $w,$  we give a construction of a complete finite MLV chain whose last valuation is $w.$
\begin{theorem}\label{1.1.11}
	Let $(K,v)$ be a valued field and let $w$ be an extension of $v$ to $K(X).$ If $\{Q_i\}_{i\in\Delta}$ is a complete set of ABKPs for $w$  such that $N$ is the last element of $\Delta,$ then 
		\begin{align}\label{1.13}
		w_0\xrightarrow{Q_1,\gamma_1} w_1\xrightarrow{Q_2,\gamma_2}\cdots \longrightarrow w_{N-1}\xrightarrow{Q_N,\gamma_N} w_N=w,
	\end{align}
	 is a complete finite MLV chain of $w$ such that 
	\begin{enumerate}[(i)]
	 	\item  if $\vartheta_{j}=\emptyset,$ then $w_j\longrightarrow w_{j+1}$ is an ordinary augmentation. Further, $w_{j+1}=w_{Q_{j+1}},$ and $\gamma_{j+1}=w(Q_{j+1}).$
	 	\item if $\vartheta_{j}\neq \emptyset,$ then $w_j\longrightarrow w_{j+1}$ is a limit augmentation. Further, $w_{j+1}=w_{Q_{j+1}}$ and $\gamma_{j+1}=w(Q_{j+1}).$
	\end{enumerate}
\end{theorem}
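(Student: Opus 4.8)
The plan is to build the chain (\ref{1.13}) step by step, walking up the well-ordered complete set $\{Q_i\}_{i\in\Delta}$ and using the truncation valuations $w_{Q_i}$ as the intermediate valuations $w_i$. The central idea is that each $Q_{j+1}$ in the complete set generates the set $\Phi_{w_j,w_{j+1}}$ at the next level, and the dichotomy $\vartheta_j=\emptyset$ versus $\vartheta_j\neq\emptyset$ controls whether the jump from $w_j$ to $w_{j+1}$ is an ordinary or a limit augmentation. Concretely, I would set $w_j:=w_{Q_j}$ for each $j\in I=\{0,\ldots,N\}$, with $w_0=w_{Q_0}=w_X$ the depth-zero valuation attached to $Q_0=X$ (Remark \ref{1.1.3}(ii)), and $\gamma_{j+1}:=w(Q_{j+1})$.

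\textbf{Step 1 (the two augmentation types).} For each $j$ I first check that $w_j<w_{j+1}$ and identify $\Phi_{w_j,w_{j+1}}$. Since $Q_{j+1}$ is an ABKP for $w$ and $w_{j+1}=w_{Q_{j+1}}$, Remark \ref{1.1.17} gives $\Phi_{w_j,w_{j+1}}=\psi(Q_j)$ when $w_j=w_{Q_j}$ is a proper truncation, and $Q_{j+1}\in\psi(Q_j)$ by Lemma \ref{1.2.6}. When $\vartheta_j=\emptyset$, the degrees jump strictly (Remark \ref{1.1.3}(iv)), so $Q_{j+1}$ is itself a key polynomial for $w_j$ of strictly larger degree and the augmentation is ordinary with $w_{j+1}=[w_j;Q_{j+1},\gamma_{j+1}]$; this uses Theorem \ref{1.1.19} to see the augmented valuation equals $w_{Q_{j+1}}$. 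When $\vartheta_j\neq\emptyset$, the intermediate key polynomials indexed by $\vartheta_{j}$ all share one degree (Remark \ref{1.1.3}(iv)) and have no maximal element, so they assemble into a continuous family $\mathcal{W}_j=(w_{Q_i})_{i\in\vartheta_{j}}$ of augmentations of $w_j$; then $Q_{j+1}$ is a Maclane--Vaqui\'e limit key polynomial for $\mathcal{W}_j$ and the step is a limit augmentation $w_{j+1}=[\mathcal{W}_j;Q_{j+1},\gamma_{j+1}]$.

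\textbf{Step 2 (MLV conditions and completeness).} I then verify the two bullet conditions of Definition \ref{1.1.13}. For an ordinary step ($\vartheta_j=\emptyset$) the strict degree increase from Remark \ref{1.1.3}(iv) gives $\deg(w_j)<\deg(\Phi_{w_j,w_{j+1}})$ exactly as required. For a limit step ($\vartheta_j\neq\emptyset$) the common stable degree $\deg(\mathcal{W}_j)$ equals $\deg(w_j)=\deg\phi_j$, so $\deg(w_j)=\deg(\Phi_{w_j,w_{j+1}})$ by Remark \ref{1.1.20}(iv), and one must also check $\phi_j\notin\Phi_{w_j,w_{j+1}}$, which follows because $\phi_j$ is stable with respect to $\mathcal{W}_j$ while elements of $\Phi_{w_j,w_{j+1}}$ are unstable. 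Completeness is immediate since $w_0$ is a depth-zero valuation. Finally, $w_N=w$ follows from condition (iii) in the definition of a complete set: every $f\in K[X]$ satisfies $w_{Q_i}(f)=w(f)$ for some $i$, and since $N$ is the maximal index with $w_{Q_N}\leq w$, one gets $w_N=w_{Q_N}=w$.

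\textbf{The main obstacle} I anticipate is Step 1 in the limit case: showing rigorously that the family $(w_{Q_i})_{i\in\vartheta_{j}}$ satisfies all of Definition \ref{1.1.14}, particularly that consecutive members are genuine ordinary augmentations of one another with $\chi_j\not\thicksim_{\rho_i}\chi_i$, and that $Q_{j+1}$ is \emph{minimal} degree among $\mathcal{W}_j$-unstable polynomials (so that it is a bona fide MLV limit key polynomial rather than merely unstable). Tying the abstract-key-polynomial invariants $\delta(Q_i)$ and $w(Q_i)$ to the continuous-family invariants $\gamma_i$ and the stable degree $m$ will require care, and I expect to lean on Remark \ref{1.1.18} together with the uniqueness statements in Remark \ref{1.1.3}(vi) to pin down that the stable degree is forced to equal $\deg\phi_j$.
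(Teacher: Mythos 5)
Your skeleton is the paper's own: take $w_j=w_{Q_j}$, $\gamma_{j+1}=w(Q_{j+1})$, let the dichotomy $\vartheta_j=\emptyset$ versus $\vartheta_j\neq\emptyset$ decide between ordinary and limit steps, and assemble $\mathcal{W}_j=(w_{Q_i})_{i\in\vartheta_j}$ into an essential continuous family in the limit case. However, two of your stated justifications are wrong. First, the blanket claim that ``$Q_{j+1}\in\psi(Q_j)$ by Lemma \ref{1.2.6}'' fails: Lemma \ref{1.2.6} runs in the opposite direction (it says elements of $\psi(Q_j)$ are ABKPs), and the membership itself is \emph{false} precisely when $\vartheta_j\neq\emptyset$, since then $\psi(Q_j)$ consists of polynomials of degree $m_j=\deg Q_j$ (it contains the $Q_i$, $i\in\vartheta_j$, by Remark \ref{1.1.3}), while $\deg Q_{j+1}>m_j$ by Remark \ref{1.1.3}(iv). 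Indeed, $Q_{j+1}\in\psi(Q_j)$ is exactly the statement that the step is ordinary, so it cannot be asserted uniformly before the case split. Second, your verification of the MLV condition $Q_j\notin\Phi_{w_j,w_{j+1}}$ at a limit step --- ``elements of $\Phi_{w_j,w_{j+1}}$ are unstable'' --- is also false: $\Phi_{w_j,w_{j+1}}=\psi(Q_j)=[Q_i]_{w_j}$ contains the key polynomials $Q_i$, $i\in\vartheta_j$, of the family $\mathcal{W}_j$, and these are $\mathcal{W}_j$-\emph{stable} by Remark \ref{1.1.18}(iii). The correct (and simpler) reason is that $w_j(Q_j)=w(Q_j)=w_{j+1}(Q_j)$, i.e., $Q_j\notin\psi(Q_j)$, so $Q_j$ cannot lie in $\Phi_{w_j,w_{j+1}}$.

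Beyond these two errors, the items you defer as ``the main obstacle'' are not peripheral checks but the bulk of the paper's proof, and the tools you point to (uniqueness in Remark \ref{1.1.3}(vi)) are not the ones that do the work. What is actually needed is Proposition \ref{2.1.6}: parts (ii)--(iii) give $Q_{i'}\in\psi(Q_i)$ for $i<i'\in\vartheta_j$, and part (v) gives $w_{Q_{i'}}=[w_{Q_i};Q_{i'},w(Q_{i'})]$, so consecutive members of $\mathcal{W}_j$ really are ordinary augmentations of one another; Corollary \ref{1.1.16}(ii) with Remark \ref{1.1.17} gives $w_{Q_i}(Q_i)=w_{Q_{i'}}(Q_i)$, whence $Q_{i'}\not\thicksim_{w_{Q_i}}Q_i$ by Theorem \ref{1.1.19}; Proposition \ref{2.1.6}(iv) shows $Q_{j+1}$ is $\mathcal{W}_j$-unstable, its minimality among unstable degrees coming from $Q_{j+1}$ being a limit key polynomial in the sense of Definition \ref{1.1.22}. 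Finally --- a step your proposal does not address at all --- the identifications $w_{Q_{j+1}}=[w_j;Q_{j+1},\gamma_{j+1}]$ (ordinary case) and $w_{Q_{j+1}}=[\mathcal{W}_j;Q_{j+1},\gamma_{j+1}]$ (limit case) do not follow from Theorem \ref{1.1.19} alone, which only yields $w_j<[w_j;Q_{j+1},w(Q_{j+1})]\leq w$; one needs Proposition \ref{2.1.6}(v), respectively Theorem \ref{2.1.2} applied after observing that the limit augmentation gives $Q_{j+1}$ the value $w(Q_{j+1})$, so that $Q_{j+1}$ does not lie in $\Phi$ of that valuation relative to $w$. Without these identifications, the chain (\ref{1.13}) is not yet shown to consist of the truncations $w_{Q_j}$, nor to satisfy Definition \ref{1.1.13}.
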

The converse of the above result also holds.
\begin{theorem}\label{1.1.12}
	Let $(K,v)$ be a valued field and let $w$ be an extension of $v$ to $K(X).$ If 
	\begin{align}\label{1.14}
		w_0\xrightarrow{\phi_1,\gamma_1} w_1\xrightarrow{\phi_2,\gamma_2}\cdots \longrightarrow w_{N-1}\xrightarrow{\phi_N,\gamma_N} w_N=w,
	\end{align}
is a complete finite MLV chain, 
	 then $\{\phi_i\}_{i\in\Delta}$ forms a complete set of ABKPs for $w$ such that
	 \begin{enumerate}[(i)]
	 	\item $N$ is the last element of $\Delta.$
	 	\item if $w_j\longrightarrow w_{j+1}$ is an ordinary augmentation, then $\phi_{j+1}\in\psi(\phi_j).$
	 	\item if $w_j\longrightarrow w_{j+1}$ is a limit augmentation, then $\phi_{j+1}$ is a limit key polynomial.
	 \end{enumerate} 
\end{theorem}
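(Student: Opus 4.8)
The plan is to run an induction along the chain (\ref{1.14}), proving simultaneously that at each main index $j$ the intermediate valuation is a truncation of $w$, namely $w_j=w_{\phi_j}$, and that $\phi_j$ is an ABKP for $w$; the full family $\{\phi_i\}_{i\in\Delta}$ is then assembled by inserting, at each limit step, the members of the associated continuous family as the stratum $\vartheta_j$. Concretely I would index $\Delta$ so that $I=\{0,1,\dots,N\}$ records the valuations $w_0,\dots,w_N$, while for each limit augmentation $w_j\to w_{j+1}$ governed by an essential continuous family $\mathcal{W}_j=(\rho_t=[w_j;\chi_t,\gamma_t])_{t\in\mathbf{A}_j}$ I set $\vartheta_j=\mathbf{A}_j$ and take the corresponding ABKPs to be the $\chi_t$; for ordinary steps $\vartheta_j=\emptyset$.

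The inductive engine combines three ingredients. First, the elementary identity that if $w'<w$ and $\phi\in\Phi_{w',w}$ then $[w';\phi,w(\phi)]=w_\phi$, which I would verify directly from the $\phi$-expansion, using that every coefficient has degree $<\deg\phi=\deg\Phi_{w',w}$ and hence the same $w'$- and $w$-value. Second, Corollary \ref{1.1.16}(ii), which gives $\Phi_{w_j,w}=\Phi_{w_j,w_{j+1}}$, together with Remark \ref{1.1.20}(iii), which identifies this set with $[\phi_{j+1}]_{w_j}$ in the ordinary case and with $[\chi_t]_{w_j}$ in the limit case. Third, Remark \ref{1.1.17}, which under the inductive hypothesis $w_j=w_{\phi_j}$ rewrites $\Phi_{w_j,w}$ as $\psi(\phi_j)$. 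Granting the hypothesis at stage $j$, the ordinary case then yields $\phi_{j+1}\in\psi(\phi_j)$ at once, whence $\phi_{j+1}$ is an ABKP for $w$ by Lemma \ref{1.2.6} and $w_{\phi_{j+1}}=[w_j;\phi_{j+1},w(\phi_{j+1})]=w_{j+1}$ by the first ingredient; this is precisely statement (ii). In the limit case the same machinery shows each $\chi_t\in\psi(\phi_j)$, so the $\chi_t$ are ABKPs for $w$ with $w_{\chi_t}=\rho_t$, populating $\vartheta_j$; the identities $w(\chi_t)=\gamma_t$ and $w(\phi_{j+1})=\gamma_{j+1}$ needed here follow from $\mathcal{W}_j$-stability of polynomials of degree $<\deg\phi_{j+1}$ and the divisibility criterion of Theorem \ref{1.1.19}.

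It remains to treat the limit key polynomial $\phi_{j+1}$ itself and to check the three defining conditions of a complete set. For $\phi_{j+1}$ I would first establish $w_{\phi_{j+1}}=w_{j+1}$ from the limit-augmentation formula, again using that coefficients of degree $<\deg\phi_{j+1}$ are stable with $\rho_{\mathcal{W}_j}$-value equal to their $w$-value; since $\phi_{j+1}$ is then a key polynomial of minimal degree for its own truncation $w_{\phi_{j+1}}=w_{j+1}$, the correspondence between such key polynomials and abstract key polynomials (the converse to the cited Proposition 2.10 of \cite{Ma}, see also \cite{NS}) makes $\phi_{j+1}$ an ABKP for $w$, and because $\vartheta_j=\mathbf{A}_j\neq\emptyset$ and $j+1\in I\setminus\{0\}$ it is a limit key polynomial in the sense of Definition \ref{1.1.22}, giving (iii). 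Strict monotonicity of $\delta(\phi_i)$ and $\delta(\chi_t)$ along $\Delta$---hence distinctness, and, with each $\mathbf{A}_j$ well-ordered, the well-ordering required in conditions (i) and (ii) of the complete-set definition---follows from Lemma \ref{1.2.6} across ordinary steps and from Remark \ref{1.1.18} within each continuous family. Condition (iii) of that definition is immediate once $w_{\phi_N}=w_N=w$ is known, since then $\phi_N$ computes $w(f)$ for every $f$; and $N$ is the last element of $\Delta$ precisely because no continuous family follows $w_N=w$, i.e.\ $\vartheta_N=\emptyset$, which yields (i). The main obstacle I anticipate is the limit step: verifying that the Maclane--Vaqui\'e limit key polynomial is genuinely an ABKP for the global valuation $w$ (and not merely for $w_{j+1}$), and that the continuous family injects into $\Delta$ as a well-ordered stratum of strictly increasing optimal values, which is where the stability bookkeeping of $\mathcal{W}_j$ against $w$ is most delicate.
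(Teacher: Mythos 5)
Your proposal is correct and follows essentially the same route as the paper's proof: the same identifications $\Phi_{w_j,w}=\Phi_{w_j,w_{j+1}}=\psi(\phi_j)$ (via Corollary \ref{1.1.16}, Remark \ref{1.1.17} and Remark \ref{1.1.20}), the same insertion of each continuous family $\mathbf{A}_j$ as the stratum $\vartheta_j$, the same $\delta$-monotonicity bookkeeping through Lemma \ref{1.2.6} and Proposition \ref{2.1.6}, and the same trivial completeness check from $w_{\phi_N}=w_N=w.$ The only difference is organizational: where you propagate ``$\phi_j$ is an ABKP for $w$ and $w_j=w_{\phi_j}$'' inductively via $\psi$-membership, the expansion identity and Lemma \ref{1.2.6}, the paper obtains it for all $j$ (and likewise for the members of each continuous family) in one stroke from Theorem \ref{2.1.2}, using $\phi_j\notin\Phi_{w_j,w}$ together with the MLV-chain property $w_j(\phi_j)=w(\phi_j).$
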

Note that if  $w_j\longrightarrow w_{j+1}$ is an ordinary augmentation for every $0
\leq j\leq N$ and $\gamma_N\in\Gamma_{v}\otimes\mathbb{Q},$ then the above MLV chain is nothing but an optimal Maclane chain and Theorem \ref{1.1.12} is an immediate  consequence of Theorem \ref{1.1.8}. On the other hand, if $\vartheta_{j}=\emptyset$ for every $j$ and $w(Q_N)\in\Gamma_{v}\otimes\mathbb{Q},$ then  Theorem \ref{1.1.11} follows from  Theorem \ref{1.1.21}.

It is known that if  $\{Q_i\}_{i\in\Delta}$ is a complete set of ABKPs for $w,$ then $w$ is a valuation-transcendental extension of $v$ to $K(X)$ if and only if $\Delta$ has a maximal element, say, N, and then 	 $w=w_{Q_N}$ (see \cite[Theorem 5.6]{MMS}). Therefore, as an immediate consequence of Theorems \ref{1.1.11} and \ref{1.1.12}, we have the following result.
\begin{corollary}
	Let $(K,v)$ and $(K(X),w)$ be as above. Then the following are equivalent:
	\begin{enumerate}[(i)]
		\item The extension $w$ is valuation-transcendental.
		\item There exist a complete set  $\{Q_i\}_{i\in\Delta}$ of ABKPs for $w$ such that $\Delta$ has a maximal element.
		\item  The extension $w$ is the last valuation of a complete finite MLV chain.
	\end{enumerate}
\end{corollary}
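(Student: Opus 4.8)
The plan is to prove the two equivalences $(i)\Leftrightarrow(ii)$ and $(ii)\Leftrightarrow(iii)$ separately, in each case by a direct appeal to a result already assembled in the excerpt, so that no new argument is actually required. The equivalence $(i)\Leftrightarrow(ii)$ is precisely the content of \cite[Theorem 5.6]{MMS}, recalled in the paragraph immediately preceding the statement: for any complete set $\{Q_i\}_{i\in\Delta}$ of ABKPs for $w$, the extension $w$ is valuation-transcendental if and only if $\Delta$ has a maximal element $N$, in which case $w=w_{Q_N}$. Since every valuation on $K(X)$ admits a complete set of ABKPs by \cite[Theorem 1.1]{NS}, this settles $(i)\Leftrightarrow(ii)$ without further work.

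For $(ii)\Rightarrow(iii)$ I would invoke Theorem \ref{1.1.11} directly. Starting from a complete set $\{Q_i\}_{i\in\Delta}$ whose index set $\Delta$ has a maximal (hence last) element $N$, Theorem \ref{1.1.11} produces the chain (\ref{1.13}) and certifies that it is a complete finite MLV chain with last valuation $w_N=w$. Thus $w$ is the last valuation of a complete finite MLV chain, which is exactly $(iii)$. Conversely, for $(iii)\Rightarrow(ii)$ I would apply Theorem \ref{1.1.12}: if $w$ is the last valuation of a complete finite MLV chain (\ref{1.14}) of length $N$, then the key polynomials $\{\phi_i\}_{i\in\Delta}$ occurring in that chain form a complete set of ABKPs for $w$ for which $N$ is the last element of $\Delta$; in particular $\Delta$ has a maximal element, which is $(ii)$. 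Combining these gives $(ii)\Leftrightarrow(iii)$ and hence the full three-way equivalence.

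The only genuinely delicate point, and the place where I would take most care, is bookkeeping rather than mathematics: one must check that the ``maximal element of $\Delta$'' appearing in $(ii)$ corresponds exactly to the ``last element $N$ of $\Delta$'' in the hypotheses and conclusions of Theorems \ref{1.1.11} and \ref{1.1.12}, and that the complete set produced in $(iii)\Rightarrow(ii)$ obeys the structural conventions of Remark \ref{1.1.3} that are in force throughout the paper. This is what legitimizes identifying ``$\Delta$ has a maximal element'' with the concrete conditions of Remark \ref{1.1.3}(vii), namely that $I=\{0,\dots,N\}$ is finite and $\vartheta_N=\emptyset$. Once this compatibility of indexing is confirmed, the corollary follows immediately from the three cited results, and indeed is exactly the ``immediate consequence of Theorems \ref{1.1.11} and \ref{1.1.12}'' advertised in the text.
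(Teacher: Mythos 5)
Your proposal is correct and follows exactly the route the paper intends: the equivalence $(i)\Leftrightarrow(ii)$ is quoted from \cite[Theorem 5.6]{MMS} (together with the existence of complete sets from \cite[Theorem 1.1]{NS}), and $(ii)\Leftrightarrow(iii)$ is obtained by applying Theorems \ref{1.1.11} and \ref{1.1.12}, which is precisely why the paper presents the corollary as an immediate consequence of those two theorems. Your extra care about matching ``maximal element of $\Delta$'' with the indexing conventions of Remark \ref{1.1.3}(vii) is a sensible check but does not constitute a departure from the paper's argument.
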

\bigskip
	\section{Preliminaries}
	Let $(K,v)$ be a valued field and  $(\overline{K},\bar{v})$ be as before. Let $w$ be an extension of $v$ to $K(X)$  and $\overline{w}$ be a common extension of $w$ and $\bar{v}$ to $\overline{K}(X).$ In this section we give some preliminary results which will be used to prove the main results. 
	
	We first recall some basic properties of ABKPs for  $w$   (see Proposition 2.16 of \cite{S-A}, Proposition 3.8, Corollary  3.13 and Theorem 6.1 of  \cite{JN1}).
	\begin{proposition}\label{2.1.6}
		For  ABKPs, $Q$ and $Q'$ for $w$ the following holds:
		\begin{enumerate}[(i)]
				\item If $w_Q<w,$ then $w_Q$ is an r.\ t.\ extension.
			\item If  $\delta(Q)<\delta(Q'),$ then $w_Q(Q')<w(Q').$ 
			\item If $\deg Q = \deg Q',$ then $$w(Q)<w(Q')\iff w_Q(Q')<w(Q')\iff \delta(Q)<\delta(Q').$$
			\item  Let  $\delta(Q)<\delta(Q').$ For any polynomial $f\in K[X],$ if $w_{Q'}(f)<w(f),$ then $w_Q(f)<w_{Q'}(f).$	
			\item If $Q'\in \psi(Q),$ then $Q$ and $Q'$ are key polynomials for $w_Q.$ Moreover, $w_{Q'}=[w_Q; Q', w_{Q'}(Q')=w(Q')].$
		\end{enumerate}
	\end{proposition}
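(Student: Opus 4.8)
The plan is to translate each of the five assertions into the concrete picture furnished by Theorem \ref{1.1.2}. Fixing an optimizing root $\alpha$ of $Q$ (and $\beta$ of $Q'$), we have $w_Q=\overline{w}_{\alpha,\delta(Q)}|_{K(X)}$ and $w_{Q'}=\overline{w}_{\beta,\delta(Q')}|_{K(X)}$, and for any $g\in K[X]$ I will use the product formula $w(g)=\overline{w}(g)=\sum_{\gamma}\overline{w}(X-\gamma)$ (the sum over the roots $\gamma$ of $g$ in $\overline{K}$) together with $\overline{w}_{\alpha,\delta(Q)}(X-\gamma)=\min\{\delta(Q),\bar{v}(\alpha-\gamma)\}$. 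The recurring elementary tool is the domination rule: if $\overline{w}(X-\alpha)=\delta(Q)<\overline{w}(X-\gamma)$, then $\bar{v}(\alpha-\gamma)=\overline{w}\bigl((X-\gamma)-(X-\alpha)\bigr)=\delta(Q)$.

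For (i), since $w_Q<w$ we have $\psi(Q)\neq\emptyset$; choosing $F\in\psi(Q)$, Lemma \ref{1.2.6} gives $\delta(Q)<\delta(F)$. For optimizing roots $\alpha$ of $Q$ and $\beta$ of $F$ the domination rule yields $\delta(Q)=\bar{v}(\alpha-\beta)\in\Gamma_{\bar{v}}$. As $w_Q$ is valuation-transcendental (Theorem \ref{1.1.2}) with pair $(\alpha,\delta(Q))$, the membership $\delta(Q)\in\Gamma_{\bar{v}}$ rules out the value-transcendental case, so $w_Q$ is residually transcendental. For (ii), apply the product formula to $Q'$: each root $\beta_j$ of $Q'$ contributes $\min\{\delta(Q),\bar{v}(\alpha-\beta_j)\}$ to $w_Q(Q')$, and $\overline{w}(X-\beta_j)\geq\min\{\delta(Q),\bar{v}(\alpha-\beta_j)\}$ to $w(Q')$; at an optimizing root $\beta$ of $Q'$ the domination rule forces the first quantity to equal $\delta(Q)<\delta(Q')=\overline{w}(X-\beta)$, so the inequality is strict in that term and $w_Q(Q')<w(Q')$.

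Part (iii), where $\deg Q=\deg Q'$, is the technical heart. Writing $Q'=Q+g$ with $\deg g<\deg Q$ gives the truncation identity $w_Q(Q')=\min\{w(Q),w(g)\}$, and $w(Q')\geq w_Q(Q')$ with strictness exactly when $w(Q)=w(g)$; from this one reads off $w_Q(Q')<w(Q')\Rightarrow w(Q)<w(Q')$, and combined with (ii) this yields $\delta(Q)<\delta(Q')\Rightarrow w_Q(Q')<w(Q')\Rightarrow w(Q)<w(Q')$. The delicate step is the reverse implication, which I will treat by the trichotomy of $\delta$: the strict case $\delta(Q)>\delta(Q')$ is handled by the same chain with the roles of $Q$ and $Q'$ interchanged, giving $w(Q)>w(Q')$; the equality case $\delta(Q)=\delta(Q')=\delta$ is the real obstacle. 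Here the optimizing roots satisfy $\bar{v}(\alpha-\beta)\geq\delta$, so the two minimal pairs $(\alpha,\delta)$ and $(\beta,\delta)$ lie in a common ball and define the same auxiliary valuation $\overline{w}_{\alpha,\delta}=\overline{w}_{\beta,\delta}$; thus $w_Q=w_{Q'}$. Feeding $w_Q(Q)=w(Q)$, $w_{Q'}(Q')=w(Q')$ and the expansion $Q'=Q+g$ into this single valuation forces $w(Q)=w(Q')$. By trichotomy these three cases establish $w(Q)<w(Q')\iff\delta(Q)<\delta(Q')$, and the middle condition is then equivalent via (ii) and the cancellation identity. I expect verifying $\overline{w}_{\alpha,\delta}=\overline{w}_{\beta,\delta}$ to be the step requiring the most care.

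For (iv), first note that $\delta(Q)<\delta(Q')$ gives, via (ii), $w_Q(Q')<w(Q')=w_{Q'}(Q')$, while the disc inclusion coming from $\bar{v}(\alpha-\beta)=\delta(Q)\leq\delta(Q')$ gives $w_Q\leq w_{Q'}$; hence $w_Q<w_{Q'}\leq w$. Now Corollary \ref{1.1.16}(ii) applied to this chain says $w_Q(f)=w(f)\iff w_Q(f)=w_{Q'}(f)$; so if $w_{Q'}(f)<w(f)$ then a fortiori $w_Q(f)<w(f)$, and the corollary upgrades this to $w_Q(f)<w_{Q'}(f)$. Finally, for (v), the hypothesis $Q'\in\psi(Q)$ gives $w_Q<w$ and, by Remark \ref{1.1.17}, $Q'\in\psi(Q)=\Phi_{w_Q,w}$; Theorem \ref{1.1.19} then makes $Q'$ a key polynomial for $w_Q$, while $Q$ is a key polynomial for $w_Q$ of minimal degree (as recorded just after the definition of key polynomials). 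For the augmentation formula I will compare, on the $Q'$-expansion $f=\sum_i f_i(Q')^i$, the value $[w_Q;Q',w(Q')](f)=\min_i\{w_Q(f_i)+i\,w(Q')\}$ with $w_{Q'}(f)=\min_i\{w(f_i)+i\,w(Q')\}$; since every coefficient has $\deg f_i<\deg Q'=\alpha(Q)$, minimality of $\alpha(Q)$ forces $w_Q(f_i)=w(f_i)$, so the two expressions coincide and $w_{Q'}=[w_Q;Q',w(Q')]$.
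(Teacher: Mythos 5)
Your proposal addresses a statement the paper never actually proves: Proposition \ref{2.1.6} is simply recalled from the literature (Proposition 2.16 of \cite{S-A} and Proposition 3.8, Corollary 3.13, Theorem 6.1 of \cite{JN1}), so there is no internal proof to compare against, and your argument has to stand on its own. Judged that way, the strategy --- reducing everything to the description of truncations as restrictions of monomial valuations $\overline{w}_{\alpha,\delta(Q)}$, then computing with the product formula and the ultrametric domination rule --- is sound, and the individual computations check out: the term-by-term comparison at the optimizing root in (ii); the trichotomy in (iii), where $\bar{v}(\alpha-\beta)\geq\delta$ indeed forces $\overline{w}_{\alpha,\delta}=\overline{w}_{\beta,\delta}$, hence $w_Q=w_{Q'}$, and comparing the two expansions $Q'=Q+g$, $Q=Q'-g$ inside this single valuation gives $w(Q)=w(Q')$; the chain $w_Q<w_{Q'}\leq w$ plus Corollary \ref{1.1.16}(ii) in (iv); and the coefficient-wise identification of $w_{Q'}$ with $[w_Q;Q',w(Q')]$ in (v), which is clean, self-contained, and needs nothing beyond Remark \ref{1.1.17}, Theorem \ref{1.1.19} and the minimality of $\alpha(Q)$.

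The one genuine gap is the foundation of parts (i)--(iv): you invoke Theorem \ref{1.1.2} to write $w_Q=\overline{w}_{\alpha,\delta(Q)}|_{K(X)}$, but that theorem only treats the case $w=w_Q$, where the optimizing root and $\delta(Q)$ are computed with respect to a common extension of that very valuation. In your applications $w_Q<w$ (this is the hypothesis of (i), and is forced in (ii) and (iv)), and your $\alpha$ and $\delta(Q)$ are computed with respect to $\overline{w}$, which extends $w$ and is therefore \emph{not} a common extension of $w_Q$. Applying Theorem \ref{1.1.2} to the valuation $w_Q$ would produce a minimal pair relative to some common extension of $w_Q$, with no a priori relation to your pair $(\alpha,\delta(Q))$. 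The identity you actually need --- that the truncation of $w$ at an ABKP $Q$ coincides with the restriction of $\overline{w}_{\alpha,\delta(Q)}$, with all data taken relative to $\overline{w}$ --- is true, but it is precisely Theorem 1.1 of \cite{NSD} (the result from which the paper says Theorem \ref{1.1.2} is deduced), and it must be cited or proved separately; it does not follow formally from Theorem \ref{1.1.2}. Two harmless slips: your product formula omits the value of the leading coefficient (irrelevant, since you only apply it to monic polynomials), and in (iii) ``strictness exactly when $w(Q)=w(g)$'' should read ``strictness only if $w(Q)=w(g)$'' --- equality of the two values does not force strictness, and only the implication you actually use is valid.
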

	The following result gives a comparison between key polynomials and ABKPs.
	\begin{theorem}[Theorem 2.17, \cite{Ma}]\label{2.1.2}
		Suppose that $w'<w$ and $Q$ is a key polynomial for $w'.$ Then $Q$ is an ABKP polynomial for $w$ if and only if it satisfies one of the following two conditions:
		\begin{enumerate}[(i)]
			\item $Q\in \Phi_{w',w},$
			\item $Q\notin \Phi_{w',w}$ and $\deg Q=\deg w'.$
		\end{enumerate}
		In the first case $w_Q=[w'; Q, w(Q)].$ In the second case $w_Q=w'.$
	\end{theorem}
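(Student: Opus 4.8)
The plan is to prove both implications by routing everything through the $Q$-truncation $w_Q$, whose explicit shape is the backbone of the argument: in case (i) one finds $w_Q=[w';Q,w(Q)]$, while in case (ii) one finds $w_Q=w'$. These computations are then tied to the $\delta$-based definition of an ABKP through Theorem~\ref{1.1.2}, using that whether $Q$ is an ABKP is already detected by its truncation; indeed $w$ and $w_Q$ agree on every polynomial of degree $<\deg Q$ and on $Q$, so the optimal values entering the definition are the same for $w$ and for $w_Q$, whence $Q$ is an ABKP for $w$ iff it is one for $w_Q$. Throughout I set $d=\deg(\Phi_{w',w})$ and recall from Theorem~\ref{1.1.19} and Corollary~\ref{1.1.16} that every element of $\Phi_{w',w}$ is a key polynomial for $w'$, that $\Phi_{w',w}=[\phi]_{w'}$, that $w'(g)=w(g)$ whenever $\deg g<d$, and that $w'(f)=w(f)\iff\phi\nmid_{w'}f$.

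For the backward direction I first compute $w_Q$. Given a $Q$-expansion $f=\sum_i f_iQ^i$ with $\deg f_i<\deg Q$, the fact that $Q$ is a key (hence $w'$-minimal) polynomial for $w'$ gives $w'(f)=\min_i\{w'(f_i)+iw'(Q)\}$. In case (i) one has $\deg Q=d$, so every $\deg f_i<d$ and thus $w(f_i)=w'(f_i)$; since moreover $w'(Q)<w(Q)$, the truncation reads $w_Q(f)=\min_i\{w'(f_i)+iw(Q)\}=[w';Q,w(Q)](f)$, so $w_Q=[w';Q,w(Q)]\le w$. In case (ii), $\deg Q=\deg w'\le d$ again forces $w(f_i)=w'(f_i)$, and $Q\notin\overline{\Phi}_{w',w}$ gives $w(Q)=w'(Q)$, so the same computation yields $w_Q=w'$. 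In either case $w_Q$ is a valuation admitting $Q$ as a minimal key polynomial; since it admits a key polynomial it is valuation-transcendental, and it satisfies $w_Q=(w_Q)_Q$, so Theorem~\ref{1.1.2} identifies $Q$ as an ABKP for $w_Q$, hence for $w$.

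For the forward direction, assume $Q$ is a key polynomial for $w'$ and an ABKP for $w$; then $w_Q$ is a valuation with $\deg(w_Q)=\deg Q$, and a term-by-term comparison of $Q$-expansions gives $w'\le w_Q\le w$. If $w_Q=w'$ then $\deg Q=\deg(w_Q)=\deg w'$ and $w'(Q)=w_Q(Q)=w(Q)$, so $Q\notin\overline{\Phi}_{w',w}$ and we are in case (ii). If $w'<w_Q$ then $\Phi_{w',w}=\Phi_{w',w_Q}$ by Corollary~\ref{1.1.16}(ii), and I choose $\phi\in\Phi_{w',w}$ with $\deg\phi=d$. When $w'(Q)<w(Q)$ we have $Q\in\overline{\Phi}_{w',w}$, so $\phi\mid_{w'}Q$ by Theorem~\ref{1.1.19}; writing $Q\sim_{w'}\phi h$ gives $Q\mid_{w'}\phi h$, whence $w'$-irreducibility forces $Q\mid_{w'}\phi$ or $Q\mid_{w'}h$. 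As soon as $\deg Q>d$ both $\deg\phi=d$ and $\deg h=\deg Q-d$ are $<\deg Q$, so the $w'$-minimality of $Q$ rules out both divisibilities; hence $\deg Q=d$, i.e.\ $Q\in\Phi_{w',w}$, which is case (i).

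The main obstacle is the remaining sub-case $w'<w_Q$ together with $w'(Q)=w(Q)$, which must be excluded, and here one uses the MacLane--Vaqui\'e description of the step $w'<w_Q$. If it is an ordinary augmentation with key polynomial $\phi$ (so $\deg(w_Q)=d=\deg Q$), then $Q$ and $\phi$ are $w_Q$-equivalent minimal key polynomials, and expanding $Q=\phi+(Q-\phi)$ below degree $d$ (where $w'=w_Q$) gives $w'(Q)=w'(\phi)<w_Q(\phi)=w_Q(Q)$, contradicting $w'(Q)=w(Q)$; if it is a limit augmentation from a family $(\rho_i)$, then $w_Q(Q)$ strictly exceeds every $\rho_i(Q)\ge w'(Q)$, again forcing $w'(Q)<w_Q(Q)=w(Q)$. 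Thus $w'<w_Q$ always lands in case (i), completing the dichotomy. The second delicate point, used in the backward direction, is the assertion that the optimal values governing the ABKP condition are insensitive to replacing $w$ by the explicitly computed $w_Q$ — equivalently, that the minimal key polynomial of that $w_Q$ is genuinely an ABKP — and it is here that the theory behind Theorem~\ref{1.1.2} does the real work.
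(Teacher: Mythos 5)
First, a structural point: the paper does not prove this statement at all --- it is imported as Theorem 2.17 of \cite{Ma} and used as a black box --- so there is no internal proof to compare yours with; your argument can only be judged on its own terms, and on those terms it has genuine gaps, located exactly where the real content of the theorem lies.

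In the backward direction, your computations of the truncation ($w_Q=[w';Q,w(Q)]$ in case (i), $w_Q=w'$ in case (ii)) are correct, but the passage from them to the ABKP property is not. Theorem \ref{1.1.2} asserts only that a valuation-transcendental valuation equals $(w_Q)_{Q^*}$ for \emph{some} ABKP $Q^*$; it does not assert that the given minimal-degree key polynomial $Q$ of $w_Q$ is an ABKP for $w_Q$. That converse statement (minimal-degree MacLane--Vaqui\'e key polynomials are abstract key polynomials) is a substantive comparison theorem, proved in \cite{Ma} and \cite{JN1} alongside the very result you are proving, and nothing quoted in this paper supplies it. The further transfer ``ABKP for $w_Q$ implies ABKP for $w$'' is also a non sequitur as justified: $\delta(f)$ is computed from a common extension $\overline{w}$ on $\overline{K}(X)$, i.e.\ from the values $\overline{w}(X-\alpha)$ with $\alpha\in\overline{K}$, and these are not determined by the agreement of $w$ and $w_Q$ on polynomials of degree $<\deg Q$ in $K[X]$; the two valuations have a priori unrelated common extensions. (The transfer is true, but it needs, e.g., the identification of $\delta$ with the Hasse-derivative invariant $\epsilon$ of \cite{NS}, \cite{NSD} --- yet another nontrivial external input.) You flag both points yourself and defer them to ``the theory behind Theorem \ref{1.1.2}''; that deferred theory is precisely the proof that is missing.

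In the forward direction the fatal gap is the exclusion of the sub-case $w'<w_Q$ with $w'(Q)=w(Q)$. You assume that the step from $w'$ up to $w_Q$ is a \emph{single} MacLane--Vaqui\'e augmentation, ordinary or limit. Nothing gives you this: a priori $w_Q$ may lie many augmentation steps (including limit steps) above $w'$, and the assertion that it lies at most one step above $w'$ is essentially the theorem being proved, so assuming it is circular. Moreover, even granting one ordinary step $w_Q=[w';\phi,\gamma]$, your claim that $Q$ and $\phi$ are $w_Q$-equivalent ``because both are minimal-degree key polynomials'' is false in general: for $w'$ the Gauss valuation over $\mathbb{Q}_p$ and $w_Q=[w';X,1]$, both $X$ and $X+p$ are key polynomials for $w_Q$ of minimal degree, yet $w_Q\bigl(X-(X+p)\bigr)=w_Q(p)=1=w_Q(X)$, so they are not $w_Q$-equivalent. (Non-equivalent minimal-degree key polynomials of an augmented valuation do satisfy $w'(Q)<w_Q(Q)$, which is what would rescue this step, but seeing that requires the classification of minimal-degree key polynomials of $[w';\phi,\gamma]$, which you never invoke; the limit branch is likewise asserted rather than proved.) By contrast, the two sub-cases you do handle --- $w_Q=w'$, and $w'<w_Q$ with $w'(Q)<w(Q)$, where the $w'$-irreducibility/$w'$-minimality argument forcing $\deg Q=d$ is essentially complete (modulo taking $h$ to be the Euclidean quotient of $Q$ by $\phi$, so that $\deg h=\deg Q-d$) --- are sound.
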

The next result relates ABKPs with distinguished pairs.
\begin{lemma}\label{1.1.5}
	Let $(K,v)$ be a henselian valued field  and $w$ be an extension of $v$ to $K(X).$ Let $F$ be an ABKP for $w$ and $Q$ be any polynomial such that $(F,Q)$  is a distinguished pair.
	Then the following holds:
	\begin{enumerate}[(i)]
		\item If $\theta$ and $\alpha$ are optimizing roots of $F$ and $Q$ respectively, then $(\theta,\alpha)$ is a $(K,v)$-distinguished pair.
		\item The polynomial  $Q$ is an ABKP for $w.$ Moreover $w_Q<w.$ 
	\end{enumerate}
\end{lemma}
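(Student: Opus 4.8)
The plan is to extract from the hypothesis a concrete distinguished pair of \emph{elements}, and then transport it to the optimizing roots by conjugation, the henselian hypothesis guaranteeing that $\bar v$ is the unique extension of $v$ to $\overline K$ and hence invariant under every $K$-automorphism $\sigma$ of $\overline K$. Write $n=\deg F$ and $m=\deg Q$. Since $(F,Q)$ is a distinguished pair of polynomials there are a root $\theta_0$ of $F$ and a root $\alpha_0$ of $Q$ for which $(\theta_0,\alpha_0)$ is a $(K,v)$-distinguished pair; in particular $n=\deg\theta_0>\deg\alpha_0=m$, and $\delta_K(\theta_0)=\bar v(\theta_0-\alpha_0)$ exists. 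By conjugation invariance, $\delta_K$ is constant on the roots of $F$, so $\delta_K(\theta)=\delta_K(\theta_0)$ for the optimizing root $\theta$ of $F$.

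The first---and, I expect, the only genuinely delicate---step is the inequality $\delta_K(\theta)<\delta(F)$, and I would prove it directly from the ABKP hypothesis on $F$, sidestepping any appeal to Theorem \ref{1.1.2}. For any $\beta\in\overline K$ with $\deg\beta<n$, its minimal polynomial $g$ satisfies $\deg g<\deg F$, so $\delta(g)<\delta(F)$; since $\beta$ is a root of $g$ we get $\overline w(X-\beta)\le\delta(g)<\delta(F)=\overline w(X-\theta)$. Applying the ultrametric law to $\theta-\beta=(X-\beta)-(X-\theta)$ then yields $\bar v(\theta-\beta)=\overline w(\theta-\beta)=\overline w(X-\beta)<\delta(F)$, and maximizing over all such $\beta$ gives $\delta_K(\theta)<\delta(F)$. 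This is the key quantitative input.

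With it in hand, part (i) is a calculation verifying the three conditions for $(\theta,\alpha)$. Condition (i) holds since $\deg\theta=n>m=\deg\alpha$. For condition (ii), choose a $K$-automorphism $\sigma$ with $\sigma(\theta_0)=\theta$; then $\alpha''=\sigma(\alpha_0)$ is a root of $Q$ with $\bar v(\theta-\alpha'')=\bar v(\theta_0-\alpha_0)=\delta_K(\theta)$, and since $\delta(F)>\delta_K(\theta)$ the ultrametric gives $\overline w(X-\alpha'')=\delta_K(\theta)$, so $\delta(Q)=\overline w(X-\alpha)\ge\delta_K(\theta)$. Conversely $\deg\alpha=m<n$ forces $\bar v(\theta-\alpha)\le\delta_K(\theta)<\delta(F)$, and the ultrametric again yields $\delta(Q)=\overline w(X-\alpha)=\bar v(\theta-\alpha)\le\delta_K(\theta)$; comparing the two bounds gives $\bar v(\theta-\alpha)=\delta_K(\theta)=\delta(Q)$, which is condition (ii). Condition (iii) is obtained by transporting condition (iii) for $(\theta_0,\alpha_0)$ through $\sigma$: for $\deg\eta<m$ one has $\deg\sigma^{-1}(\eta)<m=\deg\alpha_0$, so $\bar v(\theta-\eta)=\bar v(\theta_0-\sigma^{-1}(\eta))<\delta_K(\theta_0)=\bar v(\theta-\alpha)$.

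For part (ii) I would reuse these estimates. To see that $Q$ is an ABKP for $w$, take any $f$ with $\deg f<m$; every root $\beta$ of $f$ has degree $<m$, so condition (iii) gives $\bar v(\theta-\beta)<\delta_K(\theta)<\delta(F)$, whence $\overline w(X-\beta)=\bar v(\theta-\beta)<\delta_K(\theta)=\delta(Q)$ by the ultrametric. Maximizing over the roots of $f$ yields $\delta(f)<\delta(Q)$, which is exactly the defining property of an ABKP. Finally, from $\delta(Q)=\delta_K(\theta)<\delta(F)$ and Proposition \ref{2.1.6}(ii) we obtain $w_Q(F)<w(F)$, so $w_Q\neq w$ and therefore $w_Q<w$. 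The main obstacle is the clean derivation of $\delta_K(\theta)<\delta(F)$ from the ABKP hypothesis alone; once that inequality is available, the remainder is repeated use of the ultrametric law together with the henselian invariance of $\bar v$.
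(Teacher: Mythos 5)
Your proof is correct, and for part (ii) it is essentially the paper's own argument: both proofs use part (i) to push condition (iii) of the distinguished pair $(\theta,\alpha)$ down to roots $\beta$ of polynomials of degree less than $\deg Q$, then invoke the strong triangle law (legitimate because all the values involved lie strictly below $\overline{w}(X-\theta)=\delta(F)$) to get $\delta(g)=\bar v(\theta-\beta)<\delta(Q)$, and both finish with Proposition \ref{2.1.6}(ii) applied to $\delta(Q)<\delta(F)$ to obtain $w_Q(F)<w(F)$, hence $w_Q<w$. The only difference in part (ii) is cosmetic: the paper gets $\delta(Q)<\delta(F)$ in one line from the ABKP definition (as $\deg Q<\deg F$), while you reach it through the chain $\delta(Q)=\delta_K(\theta)<\delta(F)$; your route is slightly longer but yields the extra identity $\delta(Q)=\delta_K(\theta)$, which the paper only records later, inside the proof of Proposition \ref{1.1.6}(ii). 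The genuine divergence is part (i): the paper dispenses with it by citing Lemma 2.1 of \cite{S-A}, whereas you prove it from scratch, using henselianity to make $\bar v$ invariant under $K$-automorphisms of $\overline{K}$, conjugating the given distinguished pair $(\theta_0,\alpha_0)$ onto the optimizing root $\theta$, and verifying the three defining conditions via the ultrametric inequality, with the pivot inequality $\delta_K(\theta)<\delta(F)$ correctly extracted from the ABKP hypothesis by passing to minimal polynomials of elements of degree less than $\deg F$. All of these steps (conjugation invariance of $\delta_K$, existence of the maximum $\delta_K(\theta)$ inherited from $\delta_K(\theta_0)$, and the two ultrametric reductions giving $\delta(Q)=\delta_K(\theta)=\bar v(\theta-\alpha)$) check out, so your write-up has the merit of being self-contained precisely where the paper is not.
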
 
\begin{proof}
	\noindent(i)~ The proof follows from Lemma 2.1 of \cite{S-A}.
	
	\noindent (ii)~	Since $\deg Q<\deg F$ and $F$ is an ABKP for $w,$ so $\delta(Q)<\delta(F),$ i.e., 
	$$\overline{w}(X-\alpha)=\delta(Q)<\delta(F)=\overline{w}(X-\theta),$$ where $\theta$ and $\alpha$ are optimizing roots of $F$ and $Q$ respectively,  which in view of strong triangle law implies that 
	\begin{align}\label{1.8}
		\bar{v}(\theta-\alpha)=\delta(Q)<\overline{w}(X-\theta).
	\end{align}
	Let $g$ in $K[X]$ be any polynomial with $\deg g<\deg Q.$ Then to prove that $Q$ is an ABKP for $w$ we need to show that $\delta(g)<\delta(Q).$ As $(F,Q)$ is a distinguished pair, so by (i), $(\theta,\alpha)$ is a $(K,v)$-distinguished pair. Now for an optimizing root $\beta$ of $g$ we have $\deg\beta<\deg \alpha,$ which in view of the fact that $(\theta,\alpha)$ is a $(K,v)$-distinguished pair implies that 
	\begin{align*}
		\bar{v}(\theta-\beta)<\bar{v}(\theta-\alpha)=\delta(Q).
	\end{align*}
	From (\ref{1.8}) and the above inequality, we have that 
	$$\delta(g)=\overline{w}(X-\beta)=\min\{\overline{w}(X-\theta),\bar{v}(\theta-\beta)\}=\bar{v}(\theta-\beta)<\delta(Q).$$ Hence $Q$ is an ABKP for $w.$ As $\delta(Q)<\delta(F),$ so by Proposition \ref{2.1.6} (ii),  $w_Q(F)<w(F),$ i.e., $w_Q<w.$
\end{proof}
The following result  gives some necessary and  sufficient conditions under which an ABKP for $w$ has a saturated distinguished chain. 
\begin{corollary}[Corollary 1.18, \cite{S-A}]\label{1.2.10}
	Let $w$ be an  extension of $v$ to $K(X)$ and $Q$ be an ABKP for $w.$  Then $Q$ has a saturated distinguished chain of ABKPs if and only if there exists ABKPs,  $Q_0, Q_1,\ldots, Q_r=Q$   for $w,$ such that $\deg Q_0=1,$ $\deg Q_{i-1}<\deg Q_{i}$ and $Q_{i}\in\psi(Q_{i-1})$ for each $i,$  $1\leq i\leq r.$
\end{corollary}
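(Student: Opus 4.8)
The plan is to reduce both directions to a single description of the truncation $w_{Q_{i-1}}$ as an explicit minimal-pair valuation, and then to read off the three defining conditions of a distinguished pair by comparing degrees. The key technical fact I would establish first is the following. Let $Q'$ be an ABKP for $w$ with $w_{Q'}<w$ and let $\theta'$ be an optimizing root of $Q'$. By Proposition~\ref{2.1.6}(i) the valuation $w_{Q'}$ is residually transcendental, hence valuation-transcendental, so Theorem~\ref{1.1.2} gives $w_{Q'}=\overline{w}_{\theta',\delta(Q')}|_{K(X)}$ with $(\theta',\delta(Q'))$ a minimal pair of definition. Writing a monic $f\in K[X]$ as $\prod_j(X-\beta_j)$ over $\overline{K}$ and using
$$w_{Q'}(f)=\sum_j\min\{\delta(Q'),\bar{v}(\theta'-\beta_j)\},\qquad w(f)=\sum_j\overline{w}(X-\beta_j),$$
I would show, via the strong triangle inequality, that each summand of $w(f)-w_{Q'}(f)$ is non-negative and that a summand is strictly positive exactly when $f$ has a root $\beta$ with $\bar{v}(\theta'-\beta)=\delta(Q')$ and $\overline{w}(X-\beta)>\delta(Q')$. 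This yields the equivalence
$$w_{Q'}(f)<w(f)\iff\delta(f)>\delta(Q'),$$
and in particular $\alpha(Q')=\min\{\deg f\mid\delta(f)>\delta(Q')\}$.

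For the forward implication, let $(Q_r=Q,Q_{r-1},\dots,Q_0)$ be a saturated distinguished chain of ABKPs and fix optimizing roots $\theta_i$ of $Q_i$. The conditions $\deg Q_0=1$ and $\deg Q_{i-1}<\deg Q_i$ are immediate from the definition of a distinguished pair, and $\delta(Q_{i-1})<\delta(Q_i)$ forces $w_{Q_{i-1}}(Q_i)<w(Q_i)$ by Proposition~\ref{2.1.6}(ii), so $\alpha(Q_{i-1})\le\deg Q_i$. To obtain $Q_i\in\psi(Q_{i-1})$ it remains to prove equality. If $\alpha(Q_{i-1})<\deg Q_i$, the displayed equivalence together with Lemma~\ref{1.2.6} produces an ABKP $F\in\psi(Q_{i-1})$ of degree $<\deg Q_i$ with $\delta(Q_{i-1})<\delta(F)<\delta(Q_i)$; for an optimizing root $\beta$ of $F$ the strong triangle inequality then gives $\bar{v}(\theta_i-\beta)=\delta(F)>\delta(Q_{i-1})$, while $\deg\beta<\deg\theta_i$ and Proposition~\ref{1.1.6}(ii) give $\bar{v}(\theta_i-\beta)\le\delta_K(\theta_i)=\delta(Q_{i-1})$, a contradiction. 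Hence $\alpha(Q_{i-1})=\deg Q_i$ and $Q_i\in\psi(Q_{i-1})$.

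For the converse, let $Q_0,\dots,Q_r=Q$ be ABKPs with $\deg Q_0=1$, $\deg Q_{i-1}<\deg Q_i$ and $Q_i\in\psi(Q_{i-1})$, and fix optimizing roots $\theta_i$; I would check that each $(\theta_i,\theta_{i-1})$ is a $(K,v)$-distinguished pair. From $Q_i\in\psi(Q_{i-1})$ we have $w_{Q_{i-1}}<w$ and $\delta(Q_i)>\delta(Q_{i-1})$, whence $\bar{v}(\theta_i-\theta_{i-1})=\delta(Q_{i-1})$ by strong triangle. For condition (ii), any $\eta$ with $\deg\eta<\deg Q_i=\alpha(Q_{i-1})$ and $\bar{v}(\theta_i-\eta)>\delta(Q_{i-1})$ would have minimal polynomial $g$ satisfying $\delta(g)>\delta(Q_{i-1})$ with $\deg g<\alpha(Q_{i-1})$, contradicting the equivalence above; thus $\delta_K(\theta_i)=\delta(Q_{i-1})=\bar{v}(\theta_i-\theta_{i-1})$. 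For condition (iii), an $\eta$ with $\deg\eta<\deg Q_{i-1}$ and $\bar{v}(\theta_i-\eta)=\delta(Q_{i-1})$ would satisfy $\bar{v}(\theta_{i-1}-\eta)\ge\delta(Q_{i-1})$, contradicting the minimality of the pair $(\theta_{i-1},\delta(Q_{i-1}))$ supplied by Theorem~\ref{1.1.2}. Since $\deg Q_0=1$ gives $\theta_0\in K$, the chain $(Q_r,\dots,Q_0)$ is a saturated distinguished chain of ABKPs.

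I expect the main obstacle to be the key equivalence of the first paragraph: identifying $w_{Q_{i-1}}$ with the minimal-pair valuation $\overline{w}_{\theta_{i-1},\delta(Q_{i-1})}$ requires verifying that $\delta(Q_{i-1})$ is computed with respect to one fixed common extension $\overline{w}$ throughout and that this optimal value is unchanged on passing between $w$ and $w_{Q_{i-1}}$, and the product-formula bookkeeping must be done carefully to pin down exactly when a summand is strictly positive. Once this equivalence and the minimal-pair property are available, both implications reduce to the degree comparisons sketched above.
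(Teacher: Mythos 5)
This corollary is not proved in the paper at all: it is imported verbatim from Corollary 1.18 of \cite{S-A}, so there is no in-paper argument to compare you against, and I can only judge your proposal on its own terms. Your overall architecture is sound, and your forward direction is actually complete using only results quoted in the paper: $\delta(Q_{i-1})<\delta(Q_i)$ together with Proposition \ref{2.1.6}(ii) gives $w_{Q_{i-1}}<w$, hence $\psi(Q_{i-1})\neq\emptyset$, and your contradiction argument combining Lemma \ref{1.2.6}, Proposition \ref{1.1.6}(ii) and the strong triangle law correctly forces $\alpha(Q_{i-1})=\deg Q_i$; note that this part never actually uses your displayed equivalence.

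The gap is in the converse, exactly where you flagged it, and it is genuine as written. Your key identity $w_{Q'}=\overline{w}_{\theta',\delta(Q')}|_{K(X)}$, with $\theta'$ and $\delta(Q')$ computed relative to the \emph{fixed} common extension $\overline{w}$, cannot be extracted from Theorem \ref{1.1.2} in the way you propose: applying Theorem \ref{1.1.2} to the valuation $w_{Q'}$ yields a minimal pair of definition relative to some common extension of $w_{Q'}$ and $\bar{v}$ to $\overline{K}(X)$, and $\overline{w}$ is not such an extension, since its restriction to $K(X)$ is $w$, not $w_{Q'}$ (recall $w_{Q'}<w$). So the optimizing root and optimal value furnished by that application are a priori unrelated to your $\theta'$ and $\delta(Q')$, and everything downstream of the equivalence $w_{Q'}(f)<w(f)\iff\delta(f)>\delta(Q')$ --- namely conditions (ii) and (iii) of the distinguished-pair verification --- is left unsupported. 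The repair is to cite Theorem 1.1 of \cite{NSD} (the truncation theorem), which asserts precisely this identity with respect to the fixed $\overline{w}$; it is the same result the paper itself uses to deduce Theorem \ref{1.1.2}, and with it your product-formula bookkeeping and both implications go through. Separately, the minimal-pair property you invoke for condition (iii) needs no appeal to Theorem \ref{1.1.2} at all: if $\deg\beta<\deg\theta_{i-1}$ and $\bar{v}(\theta_{i-1}-\beta)\geq\delta(Q_{i-1})$, then the minimal polynomial $h$ of $\beta$ satisfies $\delta(h)\geq\overline{w}(X-\beta)\geq\min\{\overline{w}(X-\theta_{i-1}),\bar{v}(\theta_{i-1}-\beta)\}=\delta(Q_{i-1})$, contradicting the ABKP property of $Q_{i-1}$; making this substitution would confine the use of \cite{NSD} to condition (ii) alone.
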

\begin{lemma}[Lemma 5.1, \cite{AK2}]\label{2.1.7}
	Let $(K,v)$ be henselian valued field.  If $(\theta,\theta_1)$ and $(\theta_1,\theta_2)$ are two distinguished pairs of elements of $\overline{K},$ then $$\delta_K(\theta)>\delta_K(\theta_1)=\bar{v}(\theta_1-\theta_2)=\bar{v}(\theta-\theta_2).$$
\end{lemma}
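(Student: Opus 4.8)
The plan is to unwind the two distinguished-pair hypotheses and then combine them through the ultrametric isosceles-triangle principle. First I would record what the definitions supply directly. Since $(\theta,\theta_1)$ is a distinguished pair, condition (ii) gives $\bar{v}(\theta-\theta_1)=\delta_K(\theta)$; since $(\theta_1,\theta_2)$ is a distinguished pair, the same condition gives $\bar{v}(\theta_1-\theta_2)=\delta_K(\theta_1)$. Thus the middle equality $\delta_K(\theta_1)=\bar{v}(\theta_1-\theta_2)$ holds by definition, and it remains to prove the outer equality $\bar{v}(\theta_1-\theta_2)=\bar{v}(\theta-\theta_2)$ together with the strict inequality $\delta_K(\theta)>\delta_K(\theta_1)$.

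The key input is condition (iii) of the pair $(\theta,\theta_1)$. Because $(\theta_1,\theta_2)$ is distinguished, its condition (i) yields $\deg\theta_2<\deg\theta_1$, so I may take $\eta=\theta_2$ in (iii) and conclude
$$\bar{v}(\theta-\theta_2)<\bar{v}(\theta-\theta_1)=\delta_K(\theta).$$
This single strict inequality feeds into both remaining assertions.

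Next I would apply the strong triangle law to the decomposition $\theta-\theta_2=(\theta-\theta_1)+(\theta_1-\theta_2)$. The minimum of the three values $\bar{v}(\theta-\theta_1)$, $\bar{v}(\theta_1-\theta_2)$, $\bar{v}(\theta-\theta_2)$ is attained at least twice. Having just shown $\bar{v}(\theta-\theta_2)<\bar{v}(\theta-\theta_1)$, the value $\bar{v}(\theta-\theta_1)$ is strictly larger than $\bar{v}(\theta-\theta_2)$ and hence is not the minimum at all; the twice-attained minimum must therefore occur among $\bar{v}(\theta-\theta_2)$ and $\bar{v}(\theta_1-\theta_2)$, forcing $\bar{v}(\theta-\theta_2)=\bar{v}(\theta_1-\theta_2)$. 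Stringing this together with the inequality of the previous step gives
$$\delta_K(\theta_1)=\bar{v}(\theta_1-\theta_2)=\bar{v}(\theta-\theta_2)<\delta_K(\theta),$$
which is precisely the claimed chain.

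I do not expect a genuine obstacle here: the whole argument rests on legitimately invoking condition (iii) for $\theta_2$ (valid because $\deg\theta_2<\deg\theta_1$) and on the standard fact that among three ultrametric values the smallest is repeated. The only point deserving a moment of care is making the ultrametric case analysis exhaustive—explicitly ruling out both $\bar{v}(\theta_1-\theta_2)>\bar{v}(\theta-\theta_2)$ and $\bar{v}(\theta_1-\theta_2)<\bar{v}(\theta-\theta_2)$ as contradictions of the repeated-minimum principle—so that the equality is truly forced and not merely permitted. The henselian hypothesis is used only through the ambient theory of distinguished pairs (well-definedness of $\delta_K$ via the unique $\bar{v}$); the computation itself is purely valuation-theoretic.
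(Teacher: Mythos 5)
Your proof is correct. Note, however, that the paper contains no proof of this lemma to compare against: it is imported verbatim as Lemma 5.1 of \cite{AK2} (Aghigh--Khanduja), so your self-contained argument fills in what the paper only cites. Your steps are exactly the standard ones and are each legitimately justified: condition (ii) of the two pairs gives $\bar{v}(\theta-\theta_1)=\delta_K(\theta)$ and $\bar{v}(\theta_1-\theta_2)=\delta_K(\theta_1)$; condition (i) of $(\theta_1,\theta_2)$ gives $\deg\theta_2<\deg\theta_1$, which licenses applying condition (iii) of $(\theta,\theta_1)$ with $\eta=\theta_2$ to get $\bar{v}(\theta-\theta_2)<\bar{v}(\theta-\theta_1)$; and since the two summands in $\theta_1-\theta_2=(\theta_1-\theta)+(\theta-\theta_2)$ then have distinct values, the strong triangle law forces $\bar{v}(\theta_1-\theta_2)=\bar{v}(\theta-\theta_2)$, yielding the full chain $\delta_K(\theta_1)=\bar{v}(\theta_1-\theta_2)=\bar{v}(\theta-\theta_2)<\delta_K(\theta)$.
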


	\section{Proof of Main Results}

\begin{proof}[Proof of Proposition \ref{1.1.6}]
	\noindent (i) Since $(Q_r=Q, Q_{r-1},\ldots, Q_0)$ is a saturated distinguished chain for $Q,$ so $(Q_i, Q_{i-1})$ is a distinguished pair for each $1\leq i\leq r.$  In particular, for $i=r$ we have that  $(Q,Q_{r-1})$ is a distinguished pair and as $Q$ is an ABKP for $w,$ so by Lemma \ref{1.1.5} (ii), $Q_{r-1}$ is an ABKP for $w.$  Arguing similarly we get   that each $Q_i,$  $i\in\{0,\ldots, r-2\}$ is an ABKP for $w.$
	
	\medskip
	\noindent (ii) As $\theta_i,$ $0\leq i\leq r,$ is an  optimizing root of $Q_i,$ so by Lemma \ref{1.1.5} (i), $(\theta=\theta_r, \theta_{r-1},\ldots,\theta_0)$ is a saturated distinguished chain for $\theta$ and therefore $(\theta_{i},\theta_{i-1}),$  $(\theta_{i-1},\theta_{i-2})$ are distinguished pairs. Then by Lemma \ref{2.1.7}, 
	\begin{align}\label{1.10}
	\bar{v}(\theta_i-\theta_{i-1})=	\delta_K(\theta_{i})>\delta_K(\theta_{i-1})=\bar{v}(\theta_{i-1}-\theta_{i-2}).
	\end{align}
Since $Q$ is an ABKP for $w,$ so by $(i),$ each $Q_i, 0\leq i\leq r-1,$ is also an ABKP for $w$ and as $\deg Q_{i-1}<\deg Q_i,$  therefore
$$\overline{w}(X-\theta_{i-1})=\delta(Q_{i-1})<\delta(Q_i)=\overline{w}(X-\theta_{i}),$$ which in view of  strong triangle law implies that $$\delta(Q_{i-1})=\bar{v}(\theta_i-\theta_{i-1}).$$ The above equality  together with (\ref{1.10}) gives $$\delta_K(\theta_i)=\delta (Q_{i-1}).$$  Now $\delta(Q_r)>\delta(Q_{r-1})>\cdots>\delta(Q_0)$ follows from  the definition of an ABKP.
\end{proof}
		
	\begin{proof}[Proof of Theorem \ref{1.1.10}]
		By Proposition \ref{1.1.6} (i), each $Q_i,$ $0\leq i\leq r$ is an ABKP for $w.$ Since $\deg Q_{i-1}<\deg Q_i,$  so  $\delta(Q_{i-1}) <\delta(Q_i)$ and from Corollary \ref{1.2.10}, it follows that $Q_{i-1}\in\psi(Q_i),$ for each $1\leq i\leq r.$
		 Now let $f\in K[X]$ be any polynomial. If $\deg f<\deg Q_i,$ for some $0\leq i\leq r,$ then $w_{Q_i}(f)=w(f).$ On the other hand, if $\deg f\geq \deg Q_i,$ for every $0\leq i\leq r$, then by definition of $w,$ $w_{Q_r}(f)=w(f).$ Hence $\Lambda=\{Q_0\}\cup\{Q_1\}\cup\cdots\cup\{Q_r\}$ is a complete set of ABKPs for $w.$
	\end{proof}
	
	\begin{proof}[Proof of Theorem \ref{1.1.9}]
		Assume first that $(F=F_r, F_{r-1},\ldots, F_0)$ is a saturated distinguished chain for $F.$ Then there exists some root $\theta_i$ of $F_i$ such that $(\theta=\theta_{r},\theta_{r-1},\ldots,\theta_0)$ is a saturated distinguished chains for $\theta,$ i.e., for each, $0\leq i\leq r-1,$  $(\theta_{i+1}, \theta_{i})$ is a distinguished pair, which implies that  each $\theta_{i}$ is of minimal degree over $K$  such that
		\begin{align}\label{1.15}
			\bar{v}(\theta_{i+1}-\theta_{i})=\max\{\bar{v}(\theta_{i+1}-\eta)\mid \eta\in\overline{K},~\deg\eta<\deg\theta_{i+1}\}
		\end{align}
		and $\theta_0\in K.$ Since $(\theta,\theta_{r-1}),$ and $(\theta_{r-1},\theta_{r-2})$ are distinguished pairs, so by Lemma \ref{2.1.7}, we have that 
			$$\delta_K(\theta)=\bar{v}(\theta-\theta_{r-1})> \delta_K(\theta_{r-1})=\bar{v}(\theta_{r-1}-\theta_{r-2})=\bar{v}(\theta-\theta_{r-2}).$$ Again  on applying  Lemma \ref{2.1.7}, for  distinguished pairs    $(\theta_{r-1},\theta_{r-2}),$ and  $(\theta_{r-2},\theta_{r-3}),$   we get that 
			$$\delta_K(\theta_{r-1})=\bar{v}(\theta_{r-1}-\theta_{r-2})=\bar{v}(\theta-\theta_{r-2}) > \delta_K(\theta_{r-2})=\bar{v}(\theta_{r-2}-\theta_{r-3})$$ which in view of strong triangle law implies that $$\delta_K(\theta_{r-1})> \delta_K(\theta_{r-2})=\bar{v}(\theta-\theta_{r-3}).$$ On continuing in the similar manner, for every $1\leq i\leq r-1,$ we have that
		\begin{align}\label{1.17}
		\delta_K(\theta_{i+1})=\bar{v}(\theta_{i+1}-\theta_{i})=\bar{v}(\theta-\theta_{i})>\delta_K(\theta_{i})=\bar{v}(\theta-\theta_{i-1}).
	\end{align}
	Therefore	from the above inequality and (\ref{1.15}),  $\theta_{i}$ is of minimal degree over $K$ such that
	\begin{align}\label{1.18} 
		\bar{v}(\theta-\theta_{i})=\max\{\bar{v}(\theta_{i+1}-\eta)\mid\eta\in\overline{K},~\deg\eta<\deg\theta_{i+1}\}~\forall ~0\leq i\leq r-1.
	\end{align}
		    For any $\eta\in\overline{K}$ with $\deg\eta<\deg\theta_{i+1},$ we now claim that $\bar{v}(\theta_{i+1}-\eta)=\bar{v}(\theta-\eta).$ For $i=r-1,$ this holds trivially. Let $0\leq i\leq r-2,$ then as $\deg\eta<\deg\theta_{i+1}$ and $(\theta_{i+1},\theta_{i})$ is a distinguished pair, so by (\ref{1.17})
		\begin{align}\label{1.20}
			\bar{v}(\theta_{i+1}-\eta)\leq \bar{v}(\theta_{i+1}-\theta_{i})= \bar{v}(\theta-\theta_{i})<\bar{v}(\theta-\theta_{i+1}),
		\end{align} 
	which in view of the strong triangle law implies that $$ \bar{v}(\theta_{i+1}-\eta)=\bar{v}(\theta-\eta).$$
It now  follows from (\ref{1.18}) and the claim that
	 \begin{align*}
	 \delta_K(\theta_{i+1})=\bar{v}(\theta-\theta_{i})&= \max\{\bar{v}(\theta_{i+1}-\eta)\mid\eta\in\overline{K},~\deg\eta<\deg\theta_{i+1}\}\nonumber\\
	 &=\max\{\bar{v}(\theta-\eta)\mid\eta\in\overline{K},~\deg\eta<\deg\theta_{i+1}\}.
	\end{align*}
			The above equality  immediately implies that 
		\begin{align*}
		 \bar{v}(\theta-\theta_i)=\max\{\bar{v}(\theta-\eta)\mid\eta\in\overline{K},~\deg\eta=\deg\theta_{i}\},~\text{for all $0\leq i\leq r-1$}.
		 \end{align*}
		   In order to prove that $[F_0,F_1,\ldots, F_{r-1}]$ is an Okutsu frame for $F,$ in view of the above equality, and the fact that $\deg\theta_0=1,$ it only remains to show that for every $1\leq i\leq r-1,$
		   \begin{align}\label{1.21}
		    \deg\theta_{i}=\min\{\deg\eta\mid\eta\in\overline{K},~\bar{v}(\theta-\eta)>\bar{v}(\theta-\theta_{i-1})\}.
		    \end{align}
		     Let $\eta\in\overline{K}$ be such that $\deg\eta<\deg\theta_{i},$ then as $(\theta_{i},\theta_{i-1})$ is a distinguished pair, so $$\bar{v}(\theta_{i}-\eta)\leq\bar{v}(\theta_{i}-\theta_{i-1})=\bar{v}(\theta-\theta_{i-1})<\bar{v}(\theta-\theta_{i}),$$ which together with strong triangle law implies that $$\bar{v}(\theta-\eta)=\bar{v}(\theta_{i}-\eta)\leq\bar{v}(\theta-\theta_{i-1}).$$ Hence (\ref{1.21}) follows. 
		
		Conversely, let  $[F_0,F_1,\ldots,F_{r-1}]$ be an Okutsu frame for $F.$ Then there exist some root $\theta_i$ of $F_i$ such that 
		\begin{align}\label{1.22}
		\bar{v}(\theta-\theta_i)&=	\max\{\bar{v}(\theta-\eta)\mid \eta\in\overline{K},~ \deg\eta =\deg\theta_i\},~\text{ $0\leq i\leq r-1,$}\\
		 \deg\theta_{i}&=\min\{\deg\eta\mid \eta\in\overline{K},~ \bar{v}(\theta-\eta)>\bar{v}(\theta-\theta_{i-1})\},~\text{ $1\leq i\leq r-1,$}\nonumber
		\end{align}
	and 
	\begin{align}\label{1.23}
1=	\deg\theta_0<\cdots<	\deg\theta_{i}&<\deg\theta_{i+1}<\cdots<\deg\theta_{r}=\deg\theta\nonumber\\
	\bar{v}(\theta-\theta_0)<\cdots<\bar{v}(\theta-\theta_{i})&<\bar{v}(\theta-\theta_{i+1})<\cdots<\bar{v}(\theta-\theta_{r-1})<\infty.
	\end{align}
		   In order to  prove that $(F,F_{r-1},\ldots, F_0)$ is a saturated distinguished chain for $F,$ it is enough to show that   $(\theta=\theta_{r},\theta_{r-1},\ldots,\theta_0)$ is a saturated distinguished chain for $\theta.$  From (\ref{1.23}), on using strong triangle law we get that  
		   \begin{align}\label{1.5}
		   	\bar{v}(\theta-\theta_{i})=\bar{v}(\theta_{i+1}-\theta_{i}),~\text{  $0\leq i\leq r-1.$}
		   \end{align}
		    Now for any $\eta\in\overline{K}$ with $\deg\eta<\deg\theta_{i+1},$ we show that $$\bar{v}(\theta-\eta)=\bar{v}(\theta_{i+1}-\eta)~\text{and}~ \bar{v}(\theta_{i+1}-\theta_{i})\geq \bar{v}(\theta_{i+1}-\eta).$$ For $i=r-1,$ first equality holds trivially. If $\deg\eta=\deg\theta_{r-1},$ then by (\ref{1.22}), $\bar{v}(\theta-\eta)\leq\bar{v}(\theta-\theta_{r-1}),$ on the other hand, if $\deg\eta\neq\deg\theta_{r-1},$ then by definition of $\deg\theta_{r-1}$ and  (\ref{1.23}) we have that $$\bar{v}(\theta-\eta)\leq\bar{v}(\theta-\theta_{r-2})<\bar{v}(\theta-\theta_{r-1}).$$
		    Let $0\leq i\leq r-2.$ Since $\deg\eta<\deg\theta_{i+1},$ so by definition of $\deg\theta_{i+1}$ and  (\ref{1.23}), we get  
		    $$\bar{v}(\theta-\eta)\leq \bar{v}(\theta-\theta_{i})<\bar{v}(\theta-\theta_{i+1})$$ which in view of strong triangle law and (\ref{1.5}) implies that
		     $$\bar{v}(\theta_{i+1}-\eta)=\bar{v}(\theta-\eta)\leq\bar{v}(\theta-\theta_{i})=\bar{v}(\theta_{i+1}-\theta_{i}).$$
		   Hence
	   $$\bar{v}(\theta_{i+1}-\theta_{i})=\max\{\bar{v}(\theta_{i+1}-\eta)\mid\eta\in\overline{K},~\deg\eta<\deg\theta_{i+1}\}=\delta_K(\theta_{i+1})$$  and $\deg\theta_{i}$ is minimal with this property, because if 
	      there exist some $\beta\in\overline{K}$  with $\deg \beta<\deg\theta_i,$  then by definition of $\deg\theta_i$ and (\ref{1.23}), $$\bar{v}(\theta-\beta)\leq\bar{v}(\theta-\theta_{i-1})<\bar{v}(\theta-\theta_{i})=\bar{v}(\theta_{i+1}-\theta_{i}),$$ which  on using strong triangle law gives 
	     \begin{align*}
	     	\bar{v}(\theta_{i+1}-\beta)&=\min\{\bar{v}(\theta_{i+1}-\theta_{i}),\bar{v}(\theta_i-\theta),\bar{v}(\theta-\beta)\}\\
	     	&=\bar{v}(\theta-\beta)<\bar{v}(\theta-\theta_{i})=\bar{v}(\theta_{i+1}-\theta_{i})=\delta_K(\theta_{i+1}),
	     \end{align*}
     i.e., $\bar{v}(\theta_{i+1}-\beta)<\delta_K(\theta_{i+1}).$
	 As $\deg\theta_0=1,$ so $\theta_0\in K.$  Hence $(\theta=\theta_r,\theta_{r-1},\ldots,\theta_0)$ is a saturated distinguished chain for $\theta.$
	\end{proof}
\begin{remark}
	From  proof of the above theorem, we can conclude that  $[F_0,F_1,\ldots,F_{r-1}] $ is an Okutsu frame for a monic irreducible polynomial $F\in K[X],$  if there exist some root $\theta_i$ of $F_i$  such that 
	$$\bar{v}(\theta-\theta_i)=\max\{\bar{v}(\theta-\eta)\mid \eta\in\overline{K},~ \deg\eta<\deg \theta_{i+1}\} ~\text{for all $0\leq i\leq r-1$},$$ $\deg\theta_{i}$ is   minimal with this property and $\deg\theta_0=1.$
\end{remark}
	\begin{proof}[Proof of Theorem \ref{1.1.8}]
	Let	$$w_{-\infty}\xrightarrow{\phi_0,\gamma_0}w_0\xrightarrow{\phi_1,\gamma_1} w_1\xrightarrow{\phi_2,\gamma_2}\cdots \longrightarrow w_{r-1}\xrightarrow{\phi_r,\gamma_r} w_r=w,$$ be an optimal Maclane chain of $w.$ If $r=0,$ then $w_{-\infty}\xrightarrow{\phi_0,\gamma_0}w_0=w$ is an optimal Maclane chain of $w.$  Since $w_0=[w_{-\infty}; \phi_0, \gamma_0]$ and $\gamma_0=w_0(\phi_0)=w(\phi_0),$ so for any polynomial $f\in K[X],$ with $\phi_0$-expansion $\sum_{i\geq 0} a_i \phi_0 ^{i},$  we have $$w_{\phi_0}(f)=\min\{w(a_i\phi_0 ^i)\}=\min\{v(a_i)+iw(\phi_0)\}=w_0(f)=w(f).$$  Therefore, $\{\phi_0\}$ is a complete set of ABKP for $w_0=w.$ Assume now  that $r\geq 1.$ Since  $\phi_i$ is a key polynomial of minimal degree for $w_i,$ i.e., $\deg \phi_i=\deg w_i,$ and 
	  $w_i(\phi_{i})=w(\phi_i),$  so $\phi_i\notin\Phi_{w_i,w}$ which in view of Theorem \ref{2.1.2} implies that  $\phi_i,$  is an ABKP for $w.$ Moreover, 
	\begin{align}\label{1.16}
	 w_i=w_{\phi_i},~\text{$0\leq i\leq r$}.
	 \end{align}
 As $\deg \phi_{i-1}<\deg \phi_i,$ and $\phi_i$ is an ABKP for $w,$ so $$\delta(\phi_{i-1})<\delta(\phi_{i}).$$
	   Since $w_{\phi_{i-1}}<w,$  $\phi_i$ is a key polynomial for $w_{\phi_{i-1}}$ which is also an ABKP for $w,$ and $\deg(w_{\phi_{i-1}})=\deg \phi_{i-1}<\deg \phi_i,$   so   by Theorem \ref{2.1.2}, we have that $\phi_i\in\Phi_{w_{\phi_{i-1}},w}.$ 
	   Therefore, by Remark \ref{1.1.17} we get that 
	   \begin{align}\label{1.4}
	   	\phi_i\in\psi(\phi_{i-1}),~1\leq i\leq r.
	   \end{align}
	     Let $f$ in $K[X]$ be any polynomial. If $\deg f<\deg\phi_i$ for some $i,$ then $w_{\phi_i}(f)=w(f).$  Otherwise, if $\deg f\geq \deg\phi_i$ for all $0\leq i\leq r,$ then from (\ref{1.16}) and the fact that  $w_r=w,$ we have  $w_{\phi_r}=w.$ Hence $w_{\phi_r}(f)=w(f).$ Thus $\Lambda=\{\phi_0\}\cup\{\phi_1\}\cup\cdots\cup\{\phi_r\}$ is a complete set of ABKPs for $w$ such that $\vartheta_{i}=\emptyset,$ for every $0\leq i\leq r$ (by \ref{1.4}) and  $r$ is the maximal element of $\Delta.$
	\end{proof}
\begin{proof}[Proof of Theorem \ref{1.1.21}]
	Suppose   $\Lambda=\{Q_i\}_{i\in\Delta}$ is a complete set of ABKPs for $w$ such that $N$ is the maximal element of $\Delta.$ If $N=0,$  then $w$ is a depth zero valuation $w_{Q_0}=w$ and by the hypothesis that $w(Q_0)\in\Gamma_v\otimes\mathbb{Q},$ the extension $w$  is commensurable. Since $Q_0$ is a monic polynomial of degree one, so $Q_0$ is a key polynomial for $w_{-\infty},$ and by definition of $w_{-\infty}$ we have that $w_{-\infty}(Q_0)< w_{Q_0}(Q_0)=w(Q_0).$ Therefore
	$w_{Q_0}=[w_{-\infty}, Q_0,  w(Q_0)]$ is the augmentation of $w_{-\infty}$  and the result holds in this case.
	Assume now that $N\geq 1.$  Since each $\vartheta_{i}=\emptyset,$   so $Q_i\in\psi(Q_{i-1})$ for every $1\leq i\leq N,$ which in view of   
  Proposition \ref{2.1.6} (v) implies that   $Q_{i-1},$ $Q_i$    are key polynomials for $w_{Q_{i-1}}$ and  $$w_{Q_i}=[w_{Q_{i-1}};Q_i, w_{Q_i}(Q_i)=w(Q_i)]$$ is the augmentation of $w_{Q_{i-1}}.$
	   Now from Proposition \ref{2.1.6} (i), and the assumption that $w(Q_N)\in\Gamma_{v}\otimes\mathbb{Q},$ we get that  $$\gamma_i=w_{Q_i}(Q_i)=w(Q_i)\in\Gamma_{v}\otimes\mathbb{Q}, ~0\leq i\leq N.$$
	   By Remark \ref{1.1.3} (iv), $\deg Q_{i-1}<\deg Q_i$ for every $1\leq i\leq N$  and as $Q_i\in\psi(Q_{i-1}),$ so by \cite[Theorem 1.12 (ii)]{S-A}, we have that  $\deg Q_{i-1}\mid \deg Q_i.$   Arguing similarly as in the case $N=0,$ we have that $w_{Q_0}=[w_{-\infty}, Q_0,  w(Q_0)]$ is the augmentation of $w_{-\infty}.$ 
	  Hence
	$$w_{-\infty}\xrightarrow{Q_0,\gamma_0}w_{Q_0}\xrightarrow{Q_1,\gamma_1} w_{Q_1}\xrightarrow{Q_2,\gamma_2}\cdots \longrightarrow w_{Q_{N-1}}\xrightarrow{Q_N,\gamma_N} w_{Q_N}=w$$  is an optimal Maclane chain of $w.$
	\end{proof}

	\begin{proof}[Proof of Theorem \ref{1.1.11}]
		Let $\{Q_i\}_{i\in\Delta}$ be a complete set of ABKPs for $w$ with $N$ the maximal element of $\Delta.$ If $N=0,$  then $w=w_{Q_0}$ is a depth zero valuation and result holds trivially. Assume now that $N\geq 1.$ Then by Remark \ref{1.1.3} (i), $\Delta=\bigcup_{j=0}^{N}\Delta_j,$ where  $\Delta_j=\{j\}\cup\{\vartheta_{j}\}$ and $\vartheta_j$ is either  empty or an ordered set without a maximal element. Since for each $i\in\Delta,$ $Q_i$ is an ABKP for $w,$ so $w_{Q_i}$ is a valuation on $K(X)$ and we  denote it by $w_i.$

			Suppose first that $\vartheta_j=\emptyset$ for some $0\leq j\leq N,$ then $Q_{j+1}$ is not a limit key polynomial, i.e., $Q_{j+1}\in\psi(Q_j)$ which in view of  Proposition \ref{2.1.6} (v), implies that  $Q_{j+1}$ and $Q_j$ are key polynomials for $w_j$ and 
		$$w_{Q_{j+1}}(=w_{j+1})=[w_j; Q_{j+1}, w_{Q_{j+1}}(Q_{j+1})=w(Q_{j+1})]$$ is an ordinary augmentation of $w_j$ and $Q_{j+1},$ i.e., $w_j\rightarrow w_{j+1}$ is an ordinary augmentation. In fact $Q_j$ is a key polynomial of minimal degree for $w_j,$ i.e., $\deg Q_j=\deg(w_j)$ and as $\Phi_{w_j,w_{j+1}}=[Q_{j+1}]_{w_j},$  so
		$$\deg w_j<\deg Q_{j+1}=\deg(\Phi_{w_j,w_{j+1}}).$$ 
		
		Assume now that $\vartheta_j\neq \emptyset$ for some $0\leq j\leq N.$ 
		Then by Remark \ref{1.1.3}, for each $i\in\vartheta_j$ there exists an ABKP,  $Q_i$ for $w$ such that  $Q_i\in\psi(Q_j)$ and $\deg Q_i=\deg Q_j=m_j ~\text{(say)},$ where $Q_j$ is the ABKP corresponding to  $\{j\}.$  From Proposition \ref{2.1.6} (v), it follows that each $Q_i$ is a key polynomial for $w_j$ 
		and $w_{Q_i}$ is an ordinary augmentation of $w_j$ with respect to $w(Q_i),$ i.e.,
		$$w_{Q_i}(=w_i)=[w_{j};Q_i, w(Q_i)].$$ Therefore, for each $i\in\vartheta_{j},$ $w_i$ is an ordinary augmentation of $w_j.$
		Now for any $i<i'$ in $\vartheta_{j},$ since  $\delta(Q_i)<\delta(Q_{i'}),$ so by Proposition \ref{2.1.6} (ii), we have that $w_i(Q_{i'})<w(Q_{i'}),$ which together with $\deg Q_i=\deg Q_{i'}$ implies that $Q_{i'}\in\psi(Q_i).$  On using Proposition \ref{2.1.6} (v), we get that 
		$Q_{i'}$ is a key polynomial for $w_i,$ and $$w_{i'}=[w_i; Q_{i'}, w(Q_{i'})],$$ i.e., $w_{i'}$ is an ordinary augmentation of $w_i$ with respect to $w(Q_{i'})$  for every $i<i'\in\vartheta_{j}.$ From Corollary \ref{1.1.16} and Remark \ref{1.1.17},  we have  $\psi(Q_i)=\Phi_{w_i,w}=\Phi_{w_i, w_{i'}},$ and as $Q_i\notin\psi(Q_i),$ so 
		\begin{align}\label{1.25}
			w_i(Q_{i})=w_{i'}(Q_i),~ \forall ~ i'>i~ \text{in}~\vartheta_{j},
		\end{align}
		which in view of  Theorem \ref{1.1.19}, implies that $Q_{i'}\not\sim_{w_i} Q_i.$
		Hence $\mathcal{W}_j:=\{w_i\}_{i\in\vartheta_j}$ is a continuous family of augmentations such that for each $i\in\vartheta_{j},$ $w_i$ is an ordinary augmentation of $w_j$ with respect to  $w(Q_i)$ and from (\ref{1.25}),  $Q_i$ is $\mathcal{W}_j$-stable  with stability degree $m_j.$ As $\vartheta_{j}\neq\emptyset,$ so  by Definition \ref{1.1.22}, $Q_{j+1}$ is a limit key polynomial.  Since  $\delta(Q_i)<\delta(Q_{i'}),$ for every $i<i'\in\vartheta_{j},$ and    $w_{i'}(Q_{j+1})<w(Q_{j+1})$ for every $i'\in\vartheta_j,$ (because  $i'<j+1\in\Delta$), so  in view of Proposition \ref{2.1.6} (iv), we have   that $$w_i(Q_{j+1})<w_{i'}(Q_{j+1})~ \text{for every $i<i'\in \vartheta_{j}$}.$$  In fact  $\deg Q_{j+1}$ is minimal with this property, because $Q_{j+1}$ is a limit key polynomial, and therefore $Q_{j+1}$ is an unstable polynomial of minimal degree, say, ${m_{j}}_\infty,$ i.e., $Q_{j+1}$
		is a MLV limit key polynomial for $\mathcal{W}_j.$ Now since $\deg Q_j<\deg Q_{j+1},$ so  $m_j<{m_{j}}_\infty$ and hence
		$\mathcal{W}_j=\{w_i\}_{i\in\vartheta_j}$ is an essential continuous family of augmentations.
		
		Let $\gamma_{j+1}$ denotes the valuation $w(Q_{j+1}).$  Clearly, $\gamma_{j+1}>w_i(Q_{j+1}),$ for all  $i\in\vartheta_{j}.$ Let  $f$ be any polynomial in $K[X]$ with $Q_{j+1}$-expansion  $\sum_{s\geq 0} f_s Q_{j+1}^s.$  As $\deg f_s<\deg Q_{j+1}={m_{j}}_\infty,$ so all coefficients are $\mathcal{W}_j$-stable, i.e., $w_i(f_s)=w_{i'}(f_s)$ for all $i'>i$ in $\vartheta_{j}$ and we denote these stable values by   $\rho_{\mathcal{W}_j}(f_s).$ Then  	$$\rho_{j+1}(f)=\min_{s\geq 0}\{\rho_{\mathcal{W}_j} (f_s)+s\gamma_{j+1}\}$$ is a valuation on $K[X],$ which implies that  $\rho_{j+1}=[\mathcal{W}_j; Q_{j+1}, \gamma_{j+1}]$ is a limit augmentation of  an essential continuous family of augmentations of $w_j,$ (or $w_j\rightarrow \rho_{j+1}$ is a limit augmentation). Therefore $\rho_{j+1}(Q_{j+1})=\gamma_{j+1}=w(Q_{j+1}),$ i.e., $Q_{j+1}\notin \Phi_{\rho_{j+1},w}.$ 
		Since $Q_{j+1}$ is a key polynomial of minimal degree for $\rho_{j+1},$ so $\deg Q_{j+1}=\deg(\rho_{j+1})$ and hence in view of Theorem  \ref{2.1.2} we have that $$\rho_{j+1}=w_{Q_{j+1}}~ (=w_{j+1}).$$
		As  $w_j<w_{j+1}\leq w,$ so if $w_{j+1}=w,$ then by Remark \ref{1.1.17},  $\Phi_{w_j,w_{j+1}}=\Phi_{w_j,w}=\psi(Q_j),$ otherwise this equality holds in view of Corollary \ref{1.1.16} (ii) and  Remark \ref{1.1.17}. By Remark \ref{1.1.18} (ii), we  have that $\deg (w_j)=\deg Q_j=\deg (\psi(Q_j))=\deg( \Phi_{w_j,w_{j+1}}).$ In fact, $Q_{j+1}\notin\Phi_{w_j,w_{j+1}}=\psi(Q_j),$ because  $\deg Q_{j}<\deg Q_{j+1}.$ 

 Clearly, $w_{Q_N}=w,$  for if there exist some polynomial $f\in K[X]$ such that $w_{Q_N}(f)<w(f),$ then as $\Lambda$ is a complete set, so $w_{Q_i}(f)=w(f)$ for some $0\leq i<N.$ But this will imply that $w(f)=w_{Q_i}(f)\leq w_{Q_N}(f)<w(f).$
		
		Thus from the above arguments it follows that $$w_0\xrightarrow{Q_1,\gamma_1} w_1\xrightarrow{Q_2,\gamma_2}\cdots \longrightarrow w_{N-1}\xrightarrow{Q_N,\gamma_N} w_N=w,$$  where $w_i=w_{Q_i},$ $\gamma_i=w(Q_i)$ for every $0\leq i\leq N,$ is a MLV chain  whose last valuation is $w_N=w,$ such that:
		\begin{itemize}
			\item if $\vartheta_{j}= \emptyset,$ then $w_j\longrightarrow w_{j+1}$ is an ordinary augmentation.
			\item if $\vartheta_{j}\neq \emptyset,$ then $w_j\longrightarrow w_{j+1}$ is a limit augmentation of an essential continuous family of augmentations of $w_j.$
		\end{itemize}
		Finally the chain is complete because $w_0=w_{Q_0},$ where $Q_0=X,$ is  defined by the pair $(0,w_0(X))$ and is a depth zero valuation.
		\end{proof}

	\begin{proof}[Proof of Theorem \ref{1.1.12}]
		Let $$	w_0\xrightarrow{\phi_1,\gamma_1} w_1\xrightarrow{\phi_2,\gamma_2}\cdots \longrightarrow w_{N-1}\xrightarrow{\phi_N,\gamma_N} w_N=w,$$ be a complete finite MLV chain of $w.$ If $N=0,$  then $w_0=w$ is a depth zero valuation and the result holds trivially. Assume now that $N\geq 1.$
			Then each $\phi_j$ is a key polynomial for $w_j$ of minimal degree, i.e., $\deg \phi_j=\deg(w_j),$ and  $w_j(\phi_{j})=w(\phi_j).$   Therefore $\phi_j\notin\Phi_{w_j,w}$ which in view of Theorem \ref{2.1.2}, implies that $\phi_j$ is an ABKP for $w$ and
			\begin{align}\label{1.26}
			 w_j=w_{\phi_j}~\text{for every $0\leq j\leq N.$ } 
		\end{align}
		Suppose first that $w_j\longrightarrow w_{j+1}$ is an ordinary augmentation. Then by definition of MLV chain of $w,$ we have that $\phi_{j+1}\in \Phi_{w_j,w_{j+1}}.$ As $w_j<w_{j+1}\leq w,$ so if $w_{j+1}<w,$ then by Corollary \ref{1.1.16},  $\Phi_{w_j, w_{j+1}}=\Phi_{w_j,w},$ i.e., $w_j(\phi_{j+1})<w(\phi_{j+1}),$ otherwise this holds trivially.  Now from (\ref{1.26}), we get that $w_{\phi_j}(\phi_{j+1})<w(\phi_{j+1})$ which together with the minimality of $\deg \phi_{j+1},$ implies that $\phi_{j+1}\in \psi(\phi_j)$ and hence from Lemma \ref{1.2.6}, it follows that $$\delta(\phi_j)<\delta(\phi_{j+1}).$$
		
		Assume now that $w_j\longrightarrow w_{j+1}$ is a limit augmentation. Then $\phi_{j+1}$  is a MLV limit key polynomial for an essential continuous family (say) $\mathcal{W}_j$ of augmentations of $w_j.$ 
		Let $\mathcal{W}_j=\{\rho_i\}_{i\in \mathbf{A}_j},$  where $\mathbf{A}_j$ is some totally ordered set without a maximal element and for each $i\in\mathbf{A}_j,$  $\rho_i=[w_j, \phi_i, \gamma_{i}]$ is an ordinary augmentation of $w_j$ with stability degree, (say) $m_j=\deg \phi_j=\deg \phi_i.$ Also, for all $i<i'\in\mathbf{A}_j,$ $\phi_{i'}$ is a key polynomial for $\rho_i$ such that $$\phi_{i'}\not\thicksim_{\rho_i}\phi_i,~ \text{i.e.,}~ \phi_{i'}\nmid_{\rho_i}\phi_i~\text{and}~ \rho_{i'}=[\rho_i; \phi_{i'}, \gamma_{i'}].$$  Since $\rho_i<w$ and $\phi_{i'}\nmid_{\rho_i} \phi_i,$ so by Theorem \ref{1.1.15}, $\rho_i(\phi_i)=w(\phi_i),$ which together with Corollary \ref{1.1.16}, gives 
		\begin{align}\label{1.24}
			\phi_i\notin\Phi_{\rho_i,w}=\Phi_{\rho_i,\rho_{i'}}=[\phi_{i'}]_{\rho_i}.
		\end{align}
	Now by Remark \ref{1.1.18} (ii), for each $i\in \mathbf{A}_j,$ $\phi_i$ is a key polynomial for $\rho_i$ of minimal degree, i.e.,  $\deg \phi_i=\deg\rho_i,$  therefore keeping in mind that $\rho_i<w,$ equation (\ref{1.24}) in view of Theorem \ref{2.1.2} (ii),  implies that each $\phi_i$ is an ABKP for $w$ and $$\rho_i=w_{\phi_i},~\text{for all $i\in\mathbf{A}_j$}.$$  Hence for each $i<i'\in\mathbf{A}_j,$ $\phi_i$ and $\phi_{i'}$ are ABKPs for $w$ such that $$w(\phi_i)=w_{\phi_i}(\phi_i)<w_{\phi_{i'}}(\phi_{i'})=w(\phi_{i'})~ \text{and}~  \deg \phi_i=\deg \phi_{i'},$$ which in view of Proposition \ref{2.1.6} (iii), implies that $w_{\phi_i}(\phi_{i'})<w(\phi_{i'}).$ Therefore, by Lemma \ref{1.2.6}, we get that  $$\phi_{i'}\in\psi(\phi_i)~\text{and}~ \delta(\phi_i)<\delta(\phi_{i'})~\text{for every $i<i'\in \mathbf{A}_{j}$}.$$ As $\deg \phi_j=\deg \phi_i,$ for every $i\in\mathbf{A}_{j}$ and $w(\phi_j)<w(\phi_i),$ so again by Proposition \ref{2.1.6} (iii), we have that   $\phi_i\in\psi(\phi_j)$  and then  $$\delta(\phi_j)<\delta(\phi_i)~\forall ~i\in\mathbf{A}_j,$$ follows from Lemma \ref{1.2.6}.   Since $\mathcal{W}_j$ is essential, so $\deg \phi_{j}=\deg \phi_i<\deg \phi_{j+1},$ for every $i\in\mathbf{A}_j,$ this together with the fact that $\phi_{j+1}$ is an ABKP for $w,$ implies that $$\delta(\phi_{j})<\delta(\phi_{j+1}), ~ \delta(\phi_i)<\delta(\phi_{j+1}) ~\text{and}~ \phi_{j+1}\notin \psi(\phi_j).$$
	
	For every $0\leq j\leq N,$ let $\Delta_j=\{j\}\cup\mathbf{A}_j,$  and $\Delta=\bigcup_{j=0}^{N}\Delta_j.$ We now show that  $\Lambda=\{\phi_i\}_{i\in\Delta}$ is a complete set of ABKPs for $w.$ Clearly, as shown above for every $i<i'\in\Delta,$ we have $\delta(\phi_i)<\delta(\phi_{i'}).$ Therefore  the set $\Lambda$ is well-ordered with respect to the ordering given by $\phi_i< \phi_{i'}$ if and only if  $\delta(\phi_i)<\delta(\phi_{i'})$ for every $i<i'\in \Delta.$ 
	 It only remains  to prove that for any polynomial $f\in K[X],$ there exist some $i\in\Delta$ such that $w_{\phi_i}(f)=w(f).$
	 If $\deg f<\deg \phi_i$ for some $i\in \Delta,$ then $w_{\phi_i}(f)=w(f).$  On the other hand, if $\deg f\geq \deg \phi_i$ for all $i\in\Delta,$ then using the fact that  $w_N=w$ and  (\ref{1.26}),  we get  $w_{\phi_N}=w.$ Hence $w_{\phi_N}(f)=w(f).$
	 Thus 
	 $\{\phi_i\}_{i\in\Delta}$ is a complete set of ABKPs for $w$ such that 
	 \begin{itemize}
	 	\item if $w_j\longrightarrow w_{j+1}$ is an ordinary augmentation, then $\phi_{j+1}\in\psi(\phi_j),$
	 	\item if $w_j\longrightarrow w_{j+1}$ is a limit augmentation, then $\phi_{j+1}\notin\psi(\phi_j)$ and therefore, $\mathbf{A}_j\neq\emptyset$ implies that $\phi_{j+1}$ is a limit key polynomial.
	 \end{itemize}
	\end{proof}

	\section*{Acknowledgement}
	Research of the first author is supported by CSIR (grant no.\  09/045(1747)/2019-EMR-I).

\end{document}